\def\blfootnote{\xdef\@thefnmark{}\@footnotetext}
\let\oldmarginpar\marginpar
\renewcommand\marginpar[1]{\-\oldmarginpar[\raggedleft\footnotesize #1]%
{\raggedright\footnotesize #1}}
\newtheorem{theorem}{Theorem}[section]
\newtheorem{lemma}[theorem]{Lemma}
\newtheorem{prop}[theorem]{Proposition}
\newtheorem{fact}[theorem]{Fact}
\newtheorem{cor}[theorem]{Corollary}
\theoremstyle{remark}
\theoremstyle{definition}
\newtheorem{example}[theorem]{Examples}
\numberwithin{equation}{section}
\let\leq=\leqslant
\let\geq=\geqslant
\newcommand{\mf}[1]{\mathfrak{#1}}
\newcommand{\germ}[1]{\mathfrak{#1}}
\newcommand{\mb}[1]{\mathbb{#1}}
\newcommand{\ord}{\mathrm{ord}}
\newcommand{\eps}{\varepsilon}
\newcommand{\bos}{\mathbf{s}}
\newcommand{\tbos}{\tilde{\bos}}
\newcommand{\bft}{\mathbf{t}}
\newcommand{\Z}{\mathbb{Z}}
\newcommand{\F}{\mathbb{F}}
\newcommand{\bF}{\bar{\F}}
\newcommand{\N}{\mathbb{N}}
\newcommand{\rst}{\mathrm{Res}}
\newcommand{\Ker}{\mathrm{Ker}}
\newcommand{\Zen}{\mathrm{Z}}
\newcommand{\argu}{\hbox to 7truept{\hrulefill}}
\newcommand{\hbos}{\hat{\bos}}
\newcommand{\rk}{\mathrm{rk}}
\newcommand{\triv}{\{1\}}
\newcommand{\idn}{\mathrm{Ind}}
\newcommand{\tG}{\Phi(G,\Z_p)}
\newcommand{\eG}{\Phi(G,\F_p)}
\newcommand{\hGt}{\Upsilon(G,\sigma,2)}
\newcommand{\image}{\mathrm{Im}}
\newcommand{\sat}{\mathrm{sat}}
\newcommand{\Ext}{\mathrm{Ext}}
\newcommand{\iid}{\mathrm{id}}
\newcommand{\Syl}{\mathrm{Syl}}
\newcommand{\Odd}{\mathrm{Odd}}
\newcommand{\Sl}{\mathrm{SL}}
\newcommand{\bG}{\bar{G}}
\newcommand{\End}{\mathrm{End}}
\newcommand{\Der}{\mathrm{Der}}
\newcommand{\Q}{\mathbb{Q}}
\numberwithin{equation}{section}
\begin{document}	
	\title[\(p\)-adic analytic profinite CA-groups]{Compact \(p\)-adic analytic groups in which centralizers are abelian}
	\author[L.~Mendon\c ca]{Luis Mendon\c ca}
	\author[Th.~Weigel]{Thomas S. Weigel}
	\author[T.~Zapata]{Theo Zapata}
	\address{Departamento de Matem\'atica, Universidade Federal de Minas Gerais, Belo Horizonte, Brazil}
	\email{luismendonca@mat.ufmg.br}
	\address{Department of Mathematics and Applications, University of Milano Bicocca,  Milan (Italy)}
	\email{thomas.weigel@unimib.it}
	\address{Departamento de Matem\'atica, Instituto de Ci\^encias Exatas, Universidade de Bras\'ilia, Brasilia, Brazil}
	\email{zapata@mat.unb.br}
	\date{\today}

\begin{abstract} 
Using methods of associative algebras, Lie theory, group cohomology, and modular representation theory, we construct profinite $p$-adic analytic groups %which are CA-groups. 
such that the centralizer of each of their non-trivial elements is abelian.
The paper answers questions of P.~Shumyatsky, P.~Zalesskii, and T.~Zapata in the Israel J. Math., v.~230, 2019.
\end{abstract}
	
	\subjclass[2020]{Primary 20E18; Secondary 20J05}
	
	\keywords{}
	
%	\dedicatory{To the memory of Avinoam Mann}

	\maketitle
	%%%%%%%%%%%%%%%%%%%%%%%%%%%%%%%%%%%%%%%%%%%%%%%%%

%\vspace{-1cm}

%\newpage
\section{Introduction}

Describing centralizers is often one of the fundamental strides in understanding a group.
The condition that the centralizer of every non-trivial element in a group is abelian defines a CA-group and, in recent years, the equivalent condition of the transitivity of commutation among non-trivial elements, has attracted much attention in Group Theory and other areas of Algebra (see \cite{Wu}, \cite{FR:08} and \cite{ShZZ:19}). 

The classification of finite (and locally finite) CA-groups has been the subject of considerable study, spanning from 1925 through to 1998. Weisner initially showed that finite CA-groups are either simple or solvable (cf.~\cite{Weisner:25} and \cite{Wu}). Then in the Brauer--Suzuki--Wall  theorem (see \cite{BSW:58}) finite CA-groups of even order fall into a trichotomy: they are either abelian, Frobenius groups, or two-dimensional projective special linear groups over a finite field of even order, namely $\textup{PSL}(2,2^m)$ for $m \geq 2$. Of particular note is the 1957 landmark contribution of M.~Suzuki \cite{Suzuki:57}, wherein he proved that simple CA-groups must necessarily be of even order. Suzuki’s work was a crucial precursor to the outstanding Feit--Thompson theorem on the solubility of groups of odd order.

Moreover, the significance of infinite CA-groups in contemporary mathematics is underscored by their connection with the theory of residually free groups (cf.~\cite[p.~10]{LyndonSchupp}, \cite{Wu}, \cite{FR:08} and \cite{AFW:15}). Specifically, finitely generated residually free CA-groups are limit groups, which played a pivotal role in the solution of the Tarski problems. In these groups, each centralizer is also malnormal (i.e., any conjugate by an element outside of the subgroup intersects trivially the subgroup itself); such groups are termed \textit{CSA-groups}. Examples of infinite CA-groups are plentiful and include free groups, torsion-free hyperbolic groups, certain subgroups of $\textup{PSL}(2,\mathbb{C})$, free Burnside groups of large odd exponent, free soluble groups, Tarski monsters, one-relator groups with torsion, certain small cancellation groups, and the fundamental groups of certain compact 3-manifolds.

Profinite CSA-groups, however, are still quite mysterious. 
In response to \cite[Question~3]{ShZZ:19}, we offer the following class of profinite CSA-groups, thereby answering the posed question affirmatively.

\begin{theorem} \label{t:CSA}
    Let $D$ be a central division algebra of prime degree $\ell$ over $\mathbb{Q}_p$.
    If $p$ is odd and $\ell$ does not divide $p-1$,
    then the profinite $p$-adic analytic group $\Sl_1(D)$ is a CSA-group which is neither pro-$p$ nor abelian.
\end{theorem}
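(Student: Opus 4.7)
The plan is to identify the centralizers of non-trivial elements of $\Sl_1(D)$ with the norm-one subgroups of the maximal subfields of $D$, and then exploit the hypothesis $\ell \nmid p-1$ to control both the existence of Galois subfields and the normalizers of subfields. First, observe that the center of $\Sl_1(D)$ is trivial: a central element $g$ lies in $\mathbb{Q}_p$ with $g^\ell = \mathrm{Nrd}(g) = 1$, and the hypotheses ($p$ odd and $\ell \nmid p-1$) exclude non-trivial $\ell$-th roots of unity from $\mathbb{Q}_p$. For any non-trivial $g \in \Sl_1(D)$, the $\mathbb{Q}_p$-algebra $K := \mathbb{Q}_p(g)$ is thus a subfield of $D$ strictly containing $\mathbb{Q}_p$; since $D$ has prime degree $\ell$, the double centralizer theorem forces $[K:\mathbb{Q}_p] = \ell$, so $K$ is a maximal subfield. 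Consequently $C_{D^\times}(g) = K^\times$ and $C_{\Sl_1(D)}(g) = \{k \in K^\times : N_{K/\mathbb{Q}_p}(k) = 1\}$, which is abelian; this yields the CA property.

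The main work is malnormality. Given $y \in C_{\Sl_1(D)}(g) \setminus \{1\}$ and $z = xyx^{-1} \in C_{\Sl_1(D)}(g)$ for some $x \in \Sl_1(D)$, both $y$ and $z$ are non-trivial and generate $K$ as $\mathbb{Q}_p$-algebras, so $xKx^{-1} \supseteq \mathbb{Q}_p(z) = K$ and hence by dimension $x \in N_{D^\times}(K)$. It therefore suffices to show $N_{D^\times}(K) \cap \Sl_1(D) = C_{\Sl_1(D)}(g)$. Here the hypothesis $\ell \nmid p-1$ is crucial: it forces the only Galois degree-$\ell$ extension of $\mathbb{Q}_p$ to be the unramified extension $\mathbb{Q}_{p^\ell}$, because a totally ramified cyclic extension of degree $\ell$ would require $\mu_\ell \subseteq \mathbb{Q}_p$. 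If $K/\mathbb{Q}_p$ is non-Galois then $\Aut_{\mathbb{Q}_p}(K) = \{1\}$, and Skolem--Noether gives $N_{D^\times}(K) = C_{D^\times}(K) = K^\times$, so $x \in C_{\Sl_1(D)}(g)$ immediately. In the remaining case $K = \mathbb{Q}_{p^\ell}$, Skolem--Noether produces the short exact sequence
\[ 1 \to K^\times \to N_{D^\times}(K) \to \mathrm{Gal}(K/\mathbb{Q}_p) \to 1, \]
with $N_{D^\times}(K) = \langle \pi_D, K^\times \rangle$ for a uniformizer $\pi_D$ of $D$. I would then compute $\mathrm{Nrd}(\pi_D) = p^s$ with $\gcd(s,\ell) = 1$ (the $\ell$-th-root-of-unity ambiguity vanishing once more by the hypothesis) and combine this with the local class field theory identity $[\mathbb{Q}_p^\times : N_{K/\mathbb{Q}_p}(K^\times)] = \ell$ to deduce $N_{D^\times}(K) \cap \Sl_1(D) = K^\times \cap \Sl_1(D) = C_{\Sl_1(D)}(g)$, completing malnormality.

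For the final two assertions, the Teichm\"uller-type roots of unity $\mu_{p^\ell - 1} \subset \mathbb{Q}_{p^\ell}^\times \subseteq D^\times$ contribute, via $\mathrm{Nrd}(\zeta) = \zeta^{(p^\ell-1)/(p-1)}$, a cyclic subgroup of $\Sl_1(D)$ of order $1 + p + \cdots + p^{\ell-1}$, which for $p$ odd and $\ell \geq 2$ is greater than $1$ and coprime to $p$; hence $\Sl_1(D)$ contains non-trivial torsion of order prime to $p$ and is not pro-$p$, while the triviality of its center rules out abelianness. I expect the main obstacle to be tracking the reduced norm on $N_{D^\times}(K)$ precisely enough in the Galois case to align with local class field theory; the non-Galois case and the final two assertions are essentially formal once the reduction to maximal subfields is in place.
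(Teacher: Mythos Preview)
Your reduction to maximal subfields, the CA argument, and the final two assertions are correct and essentially match the paper. The malnormality argument, however, has a genuine gap in the case $\ell = p$ (which your hypotheses allow). Your claim that ``a totally ramified cyclic extension of degree $\ell$ would require $\mu_\ell \subseteq \mathbb{Q}_p$'' is a statement about \emph{tame} ramification and fails when $\ell = p$: the first layer of the cyclotomic $\mathbb{Z}_p$-extension is a totally (wildly) ramified cyclic degree-$p$ extension of $\mathbb{Q}_p$, and it embeds in $D$ as a maximal subfield. For such a $K$ your proposed computation breaks down on both counts: $\pi_D$ has no reason to normalise $K$, and $v_p\circ N_{K/\mathbb{Q}_p}$ is surjective onto $\mathbb{Z}$ rather than onto $\ell\mathbb{Z}$, so the valuation argument does not separate the cosets of $K^\times$ in $N_{D^\times}(K)$.

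The paper's treatment of malnormality (inside the proof of Theorem~\ref{t:sl_1^n}) avoids this case split altogether and is shorter. Once one knows that $h$ normalises the maximal subfield $F=C_D(g)$, the observation $|\Aut_{\mathbb{Q}_p}(F)|\mid \ell$ gives $[h^\ell,g]=1$; now the already--established CA property is fed back in: from $[h,h^\ell]=1$ and $[h^\ell,g]=1$ one gets either $h\in C$ or $h^\ell=1$. In the latter case $h$ is a primitive $\ell$-th root of unity, so $[\mathbb{Q}_p(h):\mathbb{Q}_p]$ divides both $\ell$ (subfield of $D$) and $\ell-1$, forcing $h\in\mathbb{Q}_p$ and contradicting $\mu_\ell(\mathbb{Q}_p)=\{1\}$. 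This argument is uniform in $F$---no Galois/non-Galois or ramified/unramified dichotomy---and uses neither the explicit generator $\pi_D$ nor local class field theory. Your route can be repaired in the missing case, but the repair (showing $\mathrm{Nrd}(h)=h^\ell$ via $\mu_\ell(\mathbb{Q}_p)=\{1\}$ and then excluding $h^\ell=1$) collapses precisely to the paper's argument.
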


To our knowledge the first paper which studied infinite profinite CA-groups is \cite{ShZZ:19}. Its main result states that such profinite groups possess 
an open subgroup which is either pro-$p$ or abelian.

In the present paper 
we focus our attention on compact $p$-adic analytic groups, a class of profinite groups extensively studied
by Lazard in \cite{laz:blue}.
 From the structure theory of compact $p$-adic analytic  groups and from the Lie correspondence between such groups and their $p$-adic Lie algebras
 (see \cite[Ch.~8-9]{DDMS:padic}),
 one deduces that in order to describe the compact $p$-adic analytic CA groups one has to accomplish a classification of finite dimensional $p$-adic Lie algebras in which the centralizer of each of its nonzero elements is abelian.

Now let $L$ be $k$-Lie algebra acting on a $k$-Lie algebra $I$ by derivations, i.e., one has a homomorphism of $k$-Lie algebras $\alpha\colon L\to\Der_k(I)$. We denote the associated semi-direct product by $L\ltimes_\alpha I$.
One says that  $L$ is acting
\emph{fixed-point freely} on $I$, if 
for $x\in L$ and $y\in I$, $\alpha(y)(x)=0$ implies
$x=0$ or $y=0$.

\begin{theorem}\label{t:CTLieAlgebras}
Let $k$ be a finite extension of $\mathbb{Q}_p$. Then a finite dimensional Lie algebra over $k$ is CA if, and only if, it is isomorphic to either: \\
\textup{(a)} $k^n$ or $k^m\ltimes_\alpha k^n$, $m\leq n$,
where $\alpha\colon k^m\to\End_k(k^n)$ is a fixed-point free action; \\
\textup{(b)} the scalar restriction $R_{K|k}(L_0)$, for a finite extension $K|k$, where $L_0$ is isomorphic to
\textup{(b1)} a split form of $\mathfrak{sl}_2$, or \textup{(b2)} the derived Lie subalgebra of a central division $K$-algebra of prime degree; \\
\textup{(c)} $ R_{K|k}(L_0)\ltimes_\beta k^n$, for $L_0$ of type \textup{(b2)}, where $\beta\colon R_{K|k}(L_0)\to\End_k(k^n)$ is a fixed-point free action.
\end{theorem}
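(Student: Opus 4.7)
The plan is first to verify that each algebra listed in (a)--(c) is CA, and then to derive the converse classification by separating non-abelian $L$ into the solvable, semisimple, and mixed Levi--Malcev cases. The verification is a routine centralizer computation in each semidirect product, exploiting for (a) that the commuting image of $\alpha$ is invertible on non-zero elements, and for (c) that in $R_{K|k}(L_0)$ each non-zero element generates a field: every non-zero element of $\mathfrak{sl}_2$ or of $\mathfrak{sl}_1(D)$ with $D$ central division of prime degree generates a maximal commutative subalgebra of its enveloping associative structure, which is then its centralizer.

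For the converse I first establish a uniform analysis of the nilradical. Assume $L$ is CA and non-abelian, so that $\mathrm{Z}(L)=0$ (a non-zero central element would force $L = C_L(z)$ to be abelian). If the nilradical $A := \mathrm{nil}(L)$ is non-zero, then $A$ is abelian (a non-zero element of $\mathrm{Z}(A)$, which is a non-zero ideal of $L$, has abelian centralizer containing $A$), and a Jacobi calculation shows that $C_L(A)$ is an ideal; since $C_L(A) \subseteq C_L(a)$ for any $0 \neq a \in A$, it is an abelian ideal, hence contained in $A$ by maximality of the nilradical among nilpotent ideals. Thus $C_L(A)=A$, and the induced action of $L/A$ on $A$ is automatically fixed-point free: $[x,a]=0$ with $a \neq 0$ gives $x \in C_L(a)$, whence $x$ commutes with all of $A$ and so $x \in A$.

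In the solvable case, Lie's theorem in characteristic zero yields $[L,L] \subseteq \mathrm{nil}(L) = A$, so $L/A$ is abelian; an abelian Cartan subalgebra $H$ (abelian because its nilpotent centre would otherwise produce a non-abelian centralizer of $L$) complements $A$ in $L$ since $H \cap A \subseteq \mathrm{Z}(L)=0$, giving case (a) with $m \leq n$ (evaluating the fixed-point free action at a non-zero vector of $A$ injects $H$ into $A$). In the mixed case $L = S \ltimes R$ with $S \neq 0$ semisimple, I first show $R = A$: in characteristic zero $[L,R] \subseteq A$, so any $r \in R \setminus A$ produces a $1$-cocycle $S \to A$, $s \mapsto [s,r]$, which is a coboundary $s \mapsto [s,a]$ by Whitehead's first lemma; then $r-a$ centralizes $S$, forcing $S \subseteq C_L(r-a)$ to be abelian, a contradiction. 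Fixed-point freeness of the action of $S$ on $A$ then excludes type (b1), since a nilpotent element of $\mathfrak{sl}_2$ acts nilpotently on any finite-dimensional $S$-module and therefore has a non-zero kernel in $A$; so $L$ falls into case (c).

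The main obstacle will be the semisimple case. First, $L$ must be simple, since a direct sum decomposition $L = L_1 \oplus L_2$ with both factors non-zero would place the non-abelian $L_2$ inside the centralizer of any non-zero $x \in L_1$; the centroid construction then writes $L = R_{K|k}(L_0)$ with $L_0$ absolutely simple over a finite extension $K|k$. The hard step is to classify the absolutely simple CA Lie algebras over a finite extension of $\mathbb{Q}_p$: one must rule out every simple type except the rank-one split form $\mathfrak{sl}_2$ and the forms $\mathfrak{sl}_1(D)$ with $D$ a central division $K$-algebra of prime degree $\ell$. For the latter, $K(x)$ is a maximal subfield of $D$ for every non-central $x$, so centralizers are fields; in every other case, a root-system analysis produces a non-regular semisimple (or subregular nilpotent) element whose centralizer contains an $\mathfrak{sl}_2$-triple, violating CA.
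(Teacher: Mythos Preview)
Your overall architecture matches the paper's---Levi decomposition followed by the solvable, semisimple, and mixed cases---and your treatments of the solvable and mixed cases are correct; the Cartan-subalgebra complement and the Whitehead-lemma argument are minor variants of what the paper does (it cites \cite{KlMo10} for the former and uses Weyl complete reducibility, the module-theoretic form of Whitehead's lemma, for the latter).

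The genuine gap is in the semisimple step, specifically for \emph{anisotropic} absolutely simple $L_0$. Your proposed mechanism---producing an element whose centralizer contains an $\mathfrak{sl}_2$-triple---cannot work there: an anisotropic Lie algebra over $K$ has no non-zero nilpotent elements, so neither $L_0$ nor any of its Lie subalgebras contains a copy of $\mathfrak{sl}_2$. Concretely, if $D$ is a central division $K$-algebra of \emph{composite} degree $n$, then $\mathfrak{sl}_1(D)$ is anisotropic and not CA, yet no centralizer in it carries an $\mathfrak{sl}_2$-triple; failure of CA must be detected by other means. The paper does this in two stages: first Veisfeiler's theorem, which says that over a $p$-adic field every anisotropic absolutely simple Lie algebra is an inner form of type $A$, i.e.\ $\mathfrak{sl}_1(D)$ for some central division algebra $D$; second, the cyclic-algebra presentation $D = W \oplus Wx \oplus \cdots \oplus Wx^{n-1}$, from which one writes down, for any proper divisor $i>1$ of $n$, explicit elements $a,b,bx^i \in [D,D]$ with $[a,b]=[b,bx^i]=0 \neq [a,bx^i]$. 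You need both inputs (or equivalents): without Veisfeiler you have not reduced the anisotropic case to division algebras at all, and without the cyclic-algebra computation you have no device to witness non-CA when the degree is composite.
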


\begin{cor}  \label{quot}
 Let $L$ be a Lie algebra of finite dimension over a $p$-adic field $k$. If $L$ is CA, then so are all of its quotients.
\end{cor}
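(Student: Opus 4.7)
The plan is to reduce the corollary to the structural classification of Theorem~\ref{t:CTLieAlgebras}: it suffices to show that each of the three families (a)--(c) is closed under the formation of Lie algebra quotients, since then every quotient of a CA Lie algebra $L$ again appears in the list and is therefore itself CA. All the content of the proof lies in an analysis of the ideal structure of each family.

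The technical workhorse, which would be the first step, is the following observation: if $\alpha\colon\mathfrak{a}\to\End_k(V)$ is a fixed-point free action on a finite-dimensional $k$-vector space $V$ and $W\subseteq V$ is $\alpha(\mathfrak{a})$-stable, then the induced action on $V/W$ is again fixed-point free. Indeed, for nonzero $x\in\mathfrak{a}$ the hypothesis makes $\alpha(x)$ injective, hence an automorphism of $V$ by finite-dimensionality; its restriction to $W$ is then also bijective, so $\alpha(x)^{-1}(W)=W$, and no nonzero class in $V/W$ is killed by $x$.

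Armed with this, I would proceed case by case. For case (a), $L=k^m\ltimes_\alpha k^n$, every ideal $\mathfrak{i}$ of $L$ is either contained in $k^n$ or contains $k^n$: an element $(x,y)\in\mathfrak{i}$ with $x\neq 0$ brackets with $(0,v)$ to give $-\alpha(x)(v)\in\mathfrak{i}$ for every $v\in k^n$, and surjectivity of $\alpha(x)$ then forces $k^n\subseteq\mathfrak{i}$. Quotients are thus either abelian (when $\mathfrak{i}\supseteq k^n$) or again of the form $k^m\ltimes_{\bar\alpha}(k^n/\mathfrak{i})$, which stays in case (a) by the lemma. For case (b), I would show that $L=R_{K|k}(L_0)$ is simple as a $k$-Lie algebra, so only the trivial quotients arise. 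Case (c), $L=L_1\ltimes_\beta k^n$ with $L_1:=R_{K|k}(L_0)$ simple, is a hybrid: the projection of any ideal $\mathfrak{i}$ to $L_1$ is $0$ or $L_1$; in the former subcase the analysis of case (a) gives quotients again in family (c) (or in (b) when $\mathfrak{i}=k^n$), while in the latter, the same bracket trick applied to any $x\in L_1\setminus\{0\}$ lifting into $\mathfrak{i}$ forces $k^n\subseteq\mathfrak{i}$ and hence $\mathfrak{i}=L$.

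The principal obstacle is establishing the simplicity of $R_{K|k}(L_0)$ as a $k$-Lie algebra in case (b). The natural attempt is to form the $K$-span $K\cdot\mathfrak{i}$ of a nonzero $k$-ideal, which is easily seen to be a $K$-ideal of $L_0$ and therefore all of $L_0$ by $K$-simplicity; the delicate step is to promote $K\cdot\mathfrak{i}=L_0$ to $\mathfrak{i}=L$, which relies on the fact that the centroid of $L_0$ coincides with $K$. That centrality is standard for split $\mathfrak{sl}_2(K)$ and for the derived subalgebra of a central division $K$-algebra of prime degree, so both subcases of (b) are covered.
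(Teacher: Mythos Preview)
Your proof is correct and follows the same case-by-case route via Theorem~\ref{t:CTLieAlgebras} as the paper. The one substantive difference is in case~(c) when $\mathfrak{i}\subseteq k^n$: the paper invokes complete reducibility of the semisimple $Q$-action to replace $k^n/\mathfrak{i}$ by a complementary $Q$-submodule $J\subseteq k^n$, whereas you pass directly to the quotient using your lemma that fixed-point-freeness descends; your route is marginally more elementary since it avoids Weyl's theorem. Your explicit justification in case~(b) that $R_{K|k}(L_0)$ is simple as a $k$-Lie algebra (via the centroid) is also more careful than the paper, which simply declares there is ``nothing to do'' for simple $L$.
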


\begin{cor} \label{GpQuot}
If a finite dimensional $p$-adic Lie group is virtually CA, then so is each of its quotients.
\end{cor}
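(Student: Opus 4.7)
The plan is to descend from the group to its $\mathbb{Q}_p$-Lie algebra, apply Corollary~\ref{quot} there, and then ascend back via the Lie correspondence. The crucial input is the translation already used implicitly in the paragraph preceding Theorem~\ref{t:CTLieAlgebras}: for a uniform pro-$p$ group $H$ with associated $\mathbb{Q}_p$-Lie algebra $\mathfrak{h}$, the exponential and logarithm maps identify centralisers and hence transfer the CA property in both directions between $H$ and $\mathfrak{h}$.

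Let $G$ be a finite dimensional $p$-adic Lie group, $H\le G$ an open CA subgroup, and $N\trianglelefteq G$ a closed normal subgroup. Because the CA property is inherited by subgroups, I would first shrink $H$ to an open uniform pro-$p$ subgroup of itself; its $\mathbb{Q}_p$-Lie algebra $\mathfrak{h}$ is then CA by the transfer just mentioned. Since $H$ normalises $N$, the Lie algebra $\mathfrak{n}$ of the open subgroup $H\cap N$ of $N$ is an ideal of $\mathfrak{h}$, and the open subgroup $HN/N\cong H/(H\cap N)$ of $G/N$ has Lie algebra $\mathfrak{h}/\mathfrak{n}$. By Corollary~\ref{quot} the Lie algebra $\mathfrak{h}/\mathfrak{n}$ is CA. Passing back to the group side, any open uniform pro-$p$ subgroup $U\le HN/N$ has Lie algebra $\mathfrak{h}/\mathfrak{n}$, and the Lie correspondence again forces $U$ to be CA. Since $U$ is open in $G/N$, the quotient $G/N$ is virtually CA, as required.

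The main technical point is the two-way transfer of the CA property between a uniform pro-$p$ group and its Lie algebra; once this is established, the remaining steps, namely matching open subgroups of $p$-adic Lie groups with their Lie algebras, closed normal subgroups with ideals, and group quotients with Lie algebra quotients, are formal consequences of the standard Lie correspondence for compact $p$-adic analytic groups as developed in \cite[Ch.~8-9]{DDMS:padic}. A minor subtlety worth flagging in the write-up is that the intermediate group $H/(H\cap N)$ need not itself be uniform, which is why one must pass to an open uniform subgroup $U$ before invoking the Lie correspondence in the reverse direction.
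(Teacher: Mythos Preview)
Your proof is correct and follows essentially the same route as the paper: pass to an open uniform pro-$p$ CA subgroup, transfer the CA property to its $\mathbb{Q}_p$-Lie algebra via the Lie correspondence, apply Corollary~\ref{quot} to the quotient Lie algebra, and transfer back to an open uniform subgroup of $G/N$. The paper's version is slightly terser, working directly with $L(G)$ and $L(N)$ rather than with $\mathfrak{h}$ and $\mathfrak{n}$, but since your $H$ and $H\cap N$ are open in $G$ and $N$ respectively these Lie algebras coincide, so the arguments are the same.
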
 

Examples of CA finite dimensional $p$-adic Lie groups which are virtually abelian possessing quotients that are not CA were constructed in \cite[Sec.~4]{ShZZ:19}. A unified and more conceptual construction enveloping all such examples is provided by the corollaries of Theorem~\ref{thm:exA}.

An infinite profinite group is said to be \emph{$p$-Bieberbach} if it contains
%a finite index closed 
an open normal subgroup which is non-trivial torsion-free finitely generated pro-$p$ abelian.

For every finite group $G$ and every prime number $p$ there exists a universal abelian
pro-$p$ Frattini extension
\begin{equation}
\label{eq:UFratt}
\tbos_G:\qquad\xymatrix{0\ar[r]&\Omega_2(G,\Z_p)\ar[r]^-{\iota}&\tG\ar[r]^-{\pi_G}&G\ar[r]&\triv},
\end{equation}
which has been introduced by W.~Gasch\"utz and studied in detail by K.~Gruenberg (cf. \cite[Ch.~9-11]{KG:coh} and \cite[Ch.~22]{FJ:fiar}). In particular, $\tG$ is a $p$-Bieberbach group whenever $p$ is dividing the order of $|G|$ (see \S\ref{ss:propFratt}, Fact~\ref{fact:fratspl}). 
If $\sigma$ is a central involution
in $G$ there exists a universal pro-$2$ extension
\begin{equation}
\label{eq:UmO}
\hat{\bos}_G:\qquad\xymatrix{0\ar[r]&\Omega_2(G,\Z_2)^-\ar[r]^-{j}&\Upsilon(G,\sigma,2)\ar[r]^-{\tau_G}&G\ar[r]&\triv,}
\end{equation}
where $\Omega_2(G,\Z_2)^-$ is the universal quotient of $\Omega_2(G,\Z_2)$ on which $\sigma$ is acting as $-1$. 
For simplicity we henceforth write
\[ \Phi=\Phi(G,\mathbb{Z}_p), \Omega=\Omega_2(G,\mathbb{Z}_p), \Upsilon=\Upsilon(G,\sigma,2), \text{ and } \Omega^-=\Omega_2(G,\Z_2)^-\, , \]
whenever no ambiguity arises.
The following fact is a direct consequence of Proposition~\ref{prop:2grp}.

\begin{fact}
For $p=2$ the group $\Upsilon$ is a $2$-Bieberbach group unless $G\simeq P\ltimes O_{2^\prime}(G)$ for some
finite cyclic $2$-group $P$ in which case one has $\Upsilon \simeq G$.
\end{fact}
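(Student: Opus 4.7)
The plan is to read the Fact off directly from Proposition~\ref{prop:2grp} combined with the very construction of the extension $\hat{\bos}_G$ in \eqref{eq:UmO}. By construction, $\Omega^-=\Omega_2(G,\Z_2)^-$ is a finitely generated abelian pro-$2$ group which sits inside $\Upsilon$ as a closed normal subgroup with finite quotient $G$, and is therefore open in $\Upsilon$. Consequently $\Upsilon$ is infinite if and only if $\Omega^-\neq 0$, and it will qualify as a $2$-Bieberbach group as soon as $\Omega^-$ is additionally torsion-free. Thus the Fact reduces to a clean dichotomy for $\Omega^-$: it either vanishes or it is a nonzero torsion-free finitely generated pro-$2$ module.

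Next I would invoke Proposition~\ref{prop:2grp} to obtain this dichotomy in the expected sharp form, namely: $\Omega^-=0$ precisely when $G\simeq P\ltimes O_{2^\prime}(G)$ for some finite cyclic $2$-group $P$, and $\Omega^-$ is otherwise a nonzero torsion-free finitely generated $\Z_2$-module. In the first case, the sequence \eqref{eq:UmO} reduces to the identity on $G$, so one reads off $\Upsilon\simeq G$ immediately. In the second case, $\Omega^-$ is precisely the open normal nontrivial torsion-free finitely generated pro-$2$ abelian subgroup of $\Upsilon$ required by the definition of a $2$-Bieberbach group, so $\Upsilon$ is of that type. Since the two alternatives are mutually exclusive and exhaustive, the Fact follows.

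The hard part is of course bundled entirely inside Proposition~\ref{prop:2grp}: one must identify the exact group-theoretic condition on $G$ that forces the universal $\sigma$-antisymmetric quotient of the Gasch\"utz--Gruenberg module $\Omega_2(G,\Z_2)$ to vanish, and one must also verify torsion-freeness in the nonvanishing case. At the level of the present Fact the remaining work is purely bookkeeping: one checks that the condition furnished by the proposition matches the stated form, noting in particular that the central involution $\sigma$ is automatically contained in the unique order-$2$ subgroup of a cyclic Sylow $2$-subgroup $P$ of $G$, so that the splitting $G\simeq P\ltimes O_{2^\prime}(G)$ is compatible with the distinguished role of $\sigma$ in the construction of $\Upsilon$.
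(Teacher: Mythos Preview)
Your proposal is correct and matches the paper's approach: the paper simply says the Fact is ``a direct consequence of Proposition~\ref{prop:2grp}'', and you unpack exactly that. One small attribution issue: torsion-freeness of $\Omega^-$ is not part of Proposition~\ref{prop:2grp} itself but rather comes from \S\ref{ss:ceninv}, where it is shown that $M_+$ is saturated in any $\Z_2[G]$-lattice $M$, so that $M^-=M/M_+$ is again a lattice; with that in hand, Proposition~\ref{prop:2grp}(b) supplies precisely the dichotomy you describe.
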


For a finite group $G$ we denote by $\Odd(G)$ the set of its subgroups of odd order.
For short, a finite group $G$ together with a central involution $\sigma$ in $G$ will be said to be \emph{$\Sl_2$-like}, if:
\begin{itemize}
\item[(i)] every subgroup $H$ in $\Odd(G)$ is abelian;
\item[(ii)] $\sigma$ is the unique element of order $2$ in $G$;
\item[(iii)] Sylow $2$-subgroups of $G$ are non cyclic.
\end{itemize}

One has the following list of examples of $\Sl_2$-like groups.
\begin{example}
\label{ex:Sl2}
\noindent
{\rm (a)} an $\Sl_2$-like finite $2$-group must be a generalized quaternion group $Q_{2^n}$ of order $2^n$ 
for some positive integer $n\geq 3$.

\noindent
{\rm (b)} the groups $G=\Sl_2(q)$, $q=p^f$, $p$ odd, are $\Sl_2$-like. Note that this class of groups includes: $\Sl_2(3)\simeq C_3\ltimes Q_8$;
$\Sl_2(5)=2.A_5$, which coincides with the
double cover of the alternating group $A_5$ of degree $5$;
$\Sl_2(9)=2.A_6$, which coincides with the
double cover of the alternating group $A_6$ of degree $6$.
\end{example}

The examples (b) have been the motivation for us to call groups with the properties (i) -- (iii) to be $\Sl_2$-like.

A main goal of this paper is to study the $2$-Bieberbach groups $\Upsilon$ for an $\Sl_2$-like finite group $G$.
For a finite group $(G,\sigma)$ with central involution $\sigma\in\Zen(G)$ and non-cyclic Sylow $2$-group $P\in\Syl_2(G)$ the $2$-Bieberbach group $\Upsilon$ is a non-trivial $2$-Frattini extension of $G$, i.e., there exists a surjective continuous group homomorphism $\tau_G\colon\Upsilon\to G$ satisfying
$\operatorname{Ker}(\tau_G)\subseteq\Phi(\Upsilon)\cap O_2(\Upsilon)$, where $\Phi(\argu)$ denotes the \emph{Frattini subgroup} of a
profinite group, and $O_2(\argu)$ denotes the maximum pro-$2$ closed normal subgroup of a profinite group.

The following theorem shows that for a $\Sl_2$-like finite group $G$ half of the centralizers of the $2$-Bieberbach group $\Upsilon$  are almost abelian.

\begin{theorem}
\label{thm:exA}
Let $G$ be an $\Sl_2$-like finite group. 
Let $x\in\Upsilon$ with $\tau_G(x)$ of even order.
Then $C_{\Upsilon}(x)$ is finite and $C_{\Upsilon}(x)\simeq P\ltimes O_{2^\prime}(C_\Upsilon(x))$ for some finite cyclic $2$-group $P$.
\end{theorem}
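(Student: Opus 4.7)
The plan is to establish finiteness of $C_{\Upsilon}(x)$, then to analyze its Sylow $2$-subgroup, and finally to conclude via Schur--Zassenhaus.

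First, set $\bar{x} := \tau_G(x) \in G$, of even order $2m$. Since $G$ is $\Sl_2$-like, $\sigma$ is the unique involution of $G$, and so $\bar{x}^m = \sigma$. The conjugation action of $\Upsilon$ on the abelian normal subgroup $\Omega^-$ factors through $G$; hence if $v \in \Omega^-$ commutes with $x$, then $\bar{x}$ fixes $v$, and so does $\bar{x}^m = \sigma$. Since $\sigma$ acts on $\Omega^-$ by $-1$, this forces $2v = 0$; the torsion-freeness of $\Omega^-$ as a $\Z_2$-module then gives $v = 0$. Hence $C_\Upsilon(x) \cap \Omega^- = \{1\}$, so $\tau_G$ restricts to an injection $C_\Upsilon(x) \hookrightarrow C_G(\bar{x})$, and $C_\Upsilon(x)$ is finite.

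Next, set $C := C_\Upsilon(x)$ and $O := O_{2^\prime}(C)$. Since $|O|$ is odd and $|C/O|$ is a $2$-power, Schur--Zassenhaus provides a Sylow $2$-complement $P \leq C$, giving $C = P \ltimes O$. The remaining task is to show that $P$ is cyclic. Via $\tau_G$ the subgroup $P$ embeds into a Sylow $2$-subgroup of $G$, which is generalized quaternion by the $\Sl_2$-like hypothesis (the unique-involution-plus-non-cyclic condition on a finite $2$-group); so $P$ is either cyclic or generalized quaternion. Let $s$ be the $2$-part of the central element $x \in Z(C)$; then $s \in Z(C) \cap P \subseteq Z(P)$. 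If $s$ has order $\geq 4$, then $P$ cannot be generalized quaternion (whose center has order $2$), and $P$ must be cyclic.

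The hard part is the remaining case $|s|=2$, in which $s = x^m$ is the unique involution of $P$ and $\bar{x} = \sigma\bar{t}$ with $\bar{t}$ of odd order. To exclude a generalized quaternion $P\simeq Q_{2^k}$ with $k\geq 3$, my plan is to argue that no such $Q_{2^k}$ embeds in $\Upsilon$ at all. The universal property of $\Upsilon$ forces the extension class $[\Upsilon]\in H^2(G,\Omega^-)$ to be nonzero (when $\Omega^-\neq 0$); since $H^2(G,\Omega^-)$ is $2$-primary while $[G:\Syl_2(G)]$ is odd, the corestriction--restriction identity makes the restriction of $[\Upsilon]$ to $\Syl_2(G)$ nonzero. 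The main obstacle is to propagate this nonvanishing down to every generalized quaternion subgroup $Q_{2^k}\subseteq\Syl_2(G)$, using the $4$-periodic cohomology of $Q_{2^k}$ and the specific structure of $\Omega^-$ as a $\Z_2[Q_{2^k}]$-module on which $\sigma$ acts by $-1$. Once nonvanishing on each such $Q_{2^k}$ is established, no complement of $\Omega^-$ in $\tau_G^{-1}(Q_{2^k})$ exists, forcing $P$ cyclic and yielding the desired decomposition $C_\Upsilon(x)\simeq P\ltimes O_{2^\prime}(C_\Upsilon(x))$.
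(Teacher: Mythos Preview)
Your finiteness argument is correct and matches the paper's: the key observation $C_\Upsilon(x)\cap\Omega^-=\{1\}$ via $\sigma$ acting by $-1$ is exactly what the paper uses.

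The rest of your plan has a genuine structural gap. You assert that $|C/O_{2'}(C)|$ is a $2$-power and then invoke Schur--Zassenhaus, but this is precisely the $2$-nilpotency of $C$ that the theorem is meant to establish --- it is not automatic for a finite subgroup of an $\Sl_2$-like group (e.g.\ $\Sl_2(5)$ itself is not $2$-nilpotent). Even if you reorder and first take $P\in\Syl_2(C)$ and try to show $P$ is cyclic (after which Burnside's normal $2$-complement theorem would give $2$-nilpotency), your plan for excluding $P\simeq Q_{2^k}$ is, as you acknowledge, only a sketch: propagating nonvanishing of the extension class from $\Syl_2(G)$ down to every generalized quaternion subgroup via periodic cohomology and the $\Z_2[Q_{2^k}]$-structure of $\Omega^-$ is not carried out, and it is not clear it goes through without further input.

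The paper bypasses this entire case analysis. Once $C$ is finite with $C\cap\Omega^-=\{1\}$, its image $H=\tau_G(C)\leq G$ contains $\sigma$, and $C$ furnishes a splitting of $\tau_G^{-1}(H)\to H$. Using Proposition~\ref{prop:2grp}(c), $\tau_G^{-1}(H)/j(N)\simeq\Upsilon(H,\sigma,2)$, so the Frattini extension $\Upsilon(H,\sigma,2)\to H$ splits; by Fact~\ref{fact:fratspl} this forces $\Omega_2(H,\Z_2)^-=0$, and then Proposition~\ref{prop:2grp}(b) gives \emph{both} that $\Syl_2(H)$ is cyclic \emph{and} that $H\simeq P\cdot O_{2'}(H)$ in one stroke. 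In other words, the mechanism you are missing is that the existence of a finite lift of $H$ into $\Upsilon$ already forces the universal $(-1)$-Frattini extension of $H$ to be trivial, which encodes exactly the structural conclusion you want.
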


From Theorem~\ref{thm:exA} and item (a) in Examples \ref{ex:Sl2} one obtains the following result, thus answering \cite[Question 4]{ShZZ:19} affirmatively.

\begin{cor}
\label{cor:exB}
For each $n\geq 3$, the pro-$2$\ group\  $\Upsilon(Q_{2^n},\sigma,2)$  is CA.
\end{cor}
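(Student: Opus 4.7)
The plan is to invoke Theorem~\ref{thm:exA} in tandem with the very special subgroup structure of the generalized quaternion group. As set-up, observe that by Example~\ref{ex:Sl2}(a) the group $G = Q_{2^n}$ is $\Sl_2$-like with $\sigma$ the unique element of order $2$, and since $Q_{2^n}$ is not cyclic for $n\ge 3$ the Fact gives a genuine $2$-Bieberbach extension $\Upsilon = \Upsilon(Q_{2^n},\sigma,2)$ whose open abelian normal kernel $\Omega^{-}$ is (non-trivial) torsion-free pro-$2$. Moreover $\Upsilon$ itself is pro-$2$, being an extension of a $2$-group by a pro-$2$ abelian group, so $O_{2'}(H) = 1$ for every closed subgroup $H \le \Upsilon$. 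Given $1 \ne x \in \Upsilon$, I would split on whether or not $\tau_G(x) = 1$.

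If $\tau_G(x) \ne 1$, then $\tau_G(x)$ automatically has even order since $Q_{2^n}$ is a $2$-group, so Theorem~\ref{thm:exA} gives $C_{\Upsilon}(x) \simeq P \ltimes O_{2'}(C_{\Upsilon}(x))$ for some finite cyclic $2$-group $P$. Combined with the pro-$2$ observation this forces $O_{2'}(C_{\Upsilon}(x)) = 1$, hence $C_{\Upsilon}(x) \simeq P$ is cyclic and in particular abelian. If instead $x \in \Omega^{-}\setminus\{0\}$, then since $\Omega^{-}$ is abelian and normal in $\Upsilon$ the conjugation action of $\Upsilon$ on $\Omega^{-}$ factors through $G$, so $C_{\Upsilon}(x) = \tau_G^{-1}(\stab_G(x))$. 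Now $\sigma$ acts as $-1$ on $\Omega^{-}$ and $\Omega^{-}$ is torsion-free, so $\sigma\cdot x = -x \ne x$ and $\sigma \notin \stab_G(x)$. But $\sigma$ is the unique involution of $Q_{2^n}$ and every non-trivial subgroup of a finite $2$-group contains an involution; hence every non-trivial subgroup of $Q_{2^n}$ contains $\sigma$. Therefore $\stab_G(x) = \{1\}$, so $C_{\Upsilon}(x) = \Omega^{-}$, which is abelian.

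The genuine content lies entirely in Theorem~\ref{thm:exA}; once that is in hand, the corollary reduces to a careful case split. The main step to get right is the second case, where the argument rests simultaneously on the torsion-freeness of $\Omega^{-}$ (coming from the $2$-Bieberbach conclusion of the Fact, and ultimately from the non-cyclicity of $Q_{2^n}$) and on the defining property of generalized quaternion groups that their unique element of order $2$ lies in every non-trivial subgroup.
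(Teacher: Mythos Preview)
Your proof is correct and follows the same approach as the paper: verify $(Q_{2^n},\sigma)$ is $\Sl_2$-like and apply Theorem~\ref{thm:exA}. The paper's own proof is extremely terse (it simply notes $\Odd(Q_{2^n})=\{\{1\}\}$, that $\sigma$ is the unique involution, and then says Theorem~\ref{thm:exA} yields the claim), whereas you have spelled out the case split and in particular supplied the argument for $x\in\Omega^{-}\setminus\{0\}$, which the paper leaves implicit; your treatment of that case via $\stab_G(x)=\{1\}$ is exactly the specialization of Case~3 in the proof of Corollary~\ref{cor:New} to the situation where $G$ has no non-trivial odd-order elements.
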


It should be mentioned that for most examples of $\Sl_2$-like finite groups $G$, the $2$-Bieberbach group $\Upsilon(G,\sigma,2)$
will not be CA, i.e., for $G=2.A_5$, the group $\Upsilon(G,\sigma,2)$
contains involutions whose centralisers are isomorphic to the finite groups 
$C_4\ltimes C_3$ and $C_4\ltimes C_5$, respectively, where $C_n=\Z/n\Z$ denotes the cyclic group of order $n$, and the involution $\sigma\in C_4$ acts trivially on $C_3$ or $C_5$, respectively.
Nevertheless, this approach provides also the following example
(cf. Corollary~\ref{cor:New} and \S\ref{ss:2mod}).

\begin{cor}
\label{cor:new}
The $2$-Bieberbach group $\Upsilon(\Sl_2(3),\sigma,2)$  is CA.
\end{cor}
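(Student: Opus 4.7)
The plan is to fix $x\in \Upsilon\setminus\{1\}$ and verify that $C_\Upsilon(x)$ is abelian, splitting the argument by the image $\bar x := \tau_G(x)\in G := \mathrm{SL}_2(3)$. When $\bar x$ has even order, Theorem~\ref{thm:exA} yields that $C_\Upsilon(x)$ is finite of the form $P\ltimes O_{2^\prime}(C_\Upsilon(x))$ with $P$ a finite cyclic $2$-group. Finiteness of $C_\Upsilon(x)$ and the torsion-freeness of $\Omega^- = \ker\tau_G$ force $C_\Upsilon(x)\cap\Omega^- = \{1\}$, so $\tau_G$ embeds $C_\Upsilon(x)$ into $G$. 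An inspection of the subgroup lattice of $\mathrm{SL}_2(3)$ shows that its subgroups admitting a decomposition as a cyclic $2$-group semidirect with an odd-order normal subgroup are exactly $1$, $\langle\sigma\rangle$, $C_3$, $C_4$ and $C_6$, all abelian; this completes the even-order case.

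Next, when $\bar x = 1$ we have $x\in\Omega^-\setminus\{0\}$, so $\Omega^- \leq C_\Upsilon(x)$ with $C_\Upsilon(x)/\Omega^- = \mathrm{Stab}_G(x)$; since $\sigma$ acts as $-1$ on $\Omega^-$ and is the unique involution of $\mathrm{SL}_2(3)$ (so it lies in every even-order cyclic subgroup of $G$), $\mathrm{Stab}_G(x)$ has odd order and thus equals $1$ or an order-$3$ subgroup. When $\bar x$ has order $3$, Schur--Zassenhaus applied to the pro-$2$ extension $\Omega^-\to\tau_G^{-1}(\langle\bar x\rangle)\to \langle\bar x\rangle$ produces an order-$3$ lift $\tilde z$ of $\bar x$; writing $x = a\tilde z$ with $a\in\Omega^-$, a direct commutator computation yields $C_\Upsilon(x)\cap\Omega^- = (\Omega^-)^{\langle\bar x\rangle}$, and $\tau_G$ maps $C_\Upsilon(x)$ into $C_G(\bar x)\simeq C_6$.

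Both remaining subcases reduce to the key module-theoretic claim $(\Omega^-)^{\langle z\rangle} = 0$ for any order-$3$ subgroup $\langle z\rangle\leq G$. Granted this, the $\bar x=1$ case forces $\mathrm{Stab}_G(x) = 1$ (otherwise $x$ would be a nonzero $\langle z\rangle$-invariant), so $C_\Upsilon(x) = \Omega^-$ is abelian, while the $\bar x$-of-order-$3$ case gives $C_\Upsilon(x)\hookrightarrow C_G(\bar x)\simeq C_6$, cyclic abelian. The main obstacle is therefore the module calculation proving $(\Omega^-)^{\langle z\rangle} = 0$: after scalar extension to $\mathbb{Q}_2$ there are exactly two irreducible $\mathbb{Q}_2[G]$-modules on which $\sigma$ acts as $-1$, both of $\mathbb{Q}_2$-dimension $4$, namely the rational form $W_1$ of the natural faithful $2$-dimensional complex representation $V_1$ of $\mathrm{SL}_2(3)$ (realized with Schur index $2$ over $\mathbb{Q}_2$), and the $\mathbb{Q}_2$-realization of the Galois-conjugate pair $V_2\oplus V_3$ of twists of $V_1$ by the non-trivial characters of $G/Q_8$. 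Since $z$ acts on $V_1$ with eigenvalues the two primitive cube roots of unity (no fixed vector) but has eigenvalue $1$ on each of $V_2, V_3$, the required vanishing is equivalent to $\Omega^-$ being isotypic of type $W_1$; this is the substance of Corollary~\ref{cor:New} and the analysis carried out in Section~\ref{ss:2mod}.
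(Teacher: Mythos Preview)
Your outline is correct and follows the paper's own route (Corollary~\ref{cor:New} specialised to $G=\Sl_2(3)$): case-split on the order of $\bar x=\tau_G(x)$, invoke Theorem~\ref{thm:exA} for even order, and reduce the remaining cases to the vanishing $(\Omega^-)^{\langle z\rangle}=0$ for $|z|=3$, which is exactly what \S\ref{ss:2mod} establishes.  A few comparative remarks.

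In the even-order case the paper does not enumerate subgroups; it uses $2$-locality of $\Sl_2(3)$ to see that $P=C_\Upsilon(x)\cap Q_8$ is normal, turning $P\ltimes O_{2'}(C_\Upsilon(x))$ into a direct product, hence abelian.  Your enumeration works too, but the $2$-locality argument is what generalises (and is what Corollary~\ref{cor:New} actually proves).

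In the order-$3$ case the Schur--Zassenhaus lift is unnecessary: for $m\in\Omega^-$ one has $xmx^{-1}=\bar x\cdot m$ directly, because the conjugation action of $\Upsilon$ on the abelian kernel $\Omega^-$ factors through $G$; so $C_\Upsilon(x)\cap\Omega^-=(\Omega^-)^{\langle\bar x\rangle}$ with no extra input.

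Your characteristic-$0$ reformulation (the vanishing is equivalent to $\Q_2\otimes\Omega^-$ being isotypic of type $W_1$) is a pleasant alternative viewpoint, and your eigenvalue analysis of $z$ on $V_1,V_2,V_3$ is correct.  The paper instead works in characteristic~$2$: it computes the Loewy structure of $\Omega_2(\bar G,\bar\F_2)$ for $\bar G\simeq A_4$, reads off that the composition factors of $\Omega_2(\bar G,\F_2)^\circ$ are $2[S_1]+2[S_2]$ (no trivial factor), and then invokes Fact~\ref{fact:Ofix}.  Either route needs a genuine calculation at this point, and you rightly defer to \S\ref{ss:2mod}; note, though, that the $\Q_2$-picture alone does not single out $W_1$ over $W_2$ without some input of that kind (for instance, the rank computation $\rk_{\Z_2}(\Omega^-)=4$ from Proposition~\ref{prop:rk2} together with the paper's mod-$2$ analysis).
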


While Theorem~\ref{thm:exA} describes the centralisers of elements
$g\in\Upsilon(G,\sigma,2)$ for which $\tau_G(g)$ is of even order,
a certain fixed-point property for subgroups $H \in \Odd(G)$  will turn out to be important in order to describe the centralisers of elements $g\in\Upsilon(G,\sigma,2)$
with $\tau_G(g)$ of odd order (see Subsection~\ref{ss:FixedPointProperty}).

\section{Lie algebras}

Throughout this section let $k$ be a finite extension of $\mb{Q}_p$, for a fixed prime $p$, and let $L$ be a finite-dimensional Lie algebra over $k$. For convenience, we state some basic results. % from \cite{KlMo10}.

\begin{lemma}[\protect{\cite[Prop.~2.1]{KlMo10}}]\label{l:indecomposable}
If $L$ is CA, then so are all of its subalgebras.
If, moreover, $L$ is nonabelian, then it is directly indecomposable, i.e., $L=A \times B$, for subalgebras $A$ and $B$, implies $A=0$ or $B=0$.
\end{lemma}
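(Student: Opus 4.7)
The plan is to handle the two assertions separately, both by direct reduction to the defining property of a CA-algebra. The first claim is essentially formal: if $M\subseteq L$ is a subalgebra and $0\neq x\in M$, then $C_M(x)=C_L(x)\cap M$, and any subalgebra of the abelian Lie algebra $C_L(x)$ is itself abelian. No further structure of $L$ is used.

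For the second claim I would argue by contradiction, assuming $L=A\times B$ with $A$ and $B$ both nonzero, and deriving that $L$ must be abelian. The key observation is that for any nonzero $a\in A$, an element $a'+b'\in A\oplus B$ commutes with $a$ if and only if $[a,a']=0$, since $[a,b']=0$ holds automatically. Hence
\[
C_L(a)\;=\;C_A(a)\oplus B,
\]
and the CA hypothesis applied to $a$ forces this sum to be abelian. In particular $B$ is abelian. Exchanging the roles of $A$ and $B$, from any nonzero $b\in B$ one gets $C_L(b)=A\oplus C_B(b)$ abelian, so $A$ is abelian as well.

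Since $A$ and $B$ are abelian and $[A,B]=0$ by the definition of the direct product, the whole algebra $L=A\oplus B$ is abelian, contradicting the hypothesis that $L$ is nonabelian. Therefore one of $A$ or $B$ must be zero, proving direct indecomposability.

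I do not expect any real obstacle: the only subtlety is the simple structural identity $C_L(a)=C_A(a)\oplus B$, which is immediate from the definition of the direct product of Lie algebras, and then one merely has to notice that applying the CA property once on each side yields abelianness of both factors. This argument is essentially the same as the one in \cite[Prop.~2.1]{KlMo10}, reproduced here for completeness.
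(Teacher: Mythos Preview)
Your argument is correct. Note, however, that the paper does not actually supply its own proof of this lemma: it is simply stated with a citation to \cite[Prop.~2.1]{KlMo10}. Your proof is the standard one and matches what one finds in that reference, so there is nothing to compare.
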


\begin{lemma}[\protect{\cite[Prop.~2.2]{KlMo10}}] \label{l:f-p-fExt}
Let $A$ be an abelian ideal of $L$.
If $L/A$ is CA and acts fixed-point-freely by inner derivations on $A$,
then $L$ is CA.
\end{lemma}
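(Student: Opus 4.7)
The plan is to take an arbitrary nonzero $x \in L$ and prove that $C_L(x)$ is abelian, splitting on whether $x$ lies in $A$. First I would note that the hypothesis ``$L/A$ acts by inner derivations on $A$'' means the map $L \to \Der_k(A)$, $u \mapsto [u,\cdot]|_A$, descends to $L/A$; this descent is legitimate precisely because $A$ is abelian, so that elements of $A$ itself act trivially. The fixed-point-free hypothesis then reads: for $\bar u \in L/A$ and $a \in A$, if $[u,a]=0$ then $\bar u = 0$ or $a = 0$.

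Suppose first $x \in A$. Since $A$ is abelian, $A \subseteq C_L(x)$. Conversely, if $y \in C_L(x)$, the equation $[y,x]=0$ says that $\bar y \in L/A$ fixes the nonzero element $x \in A$, forcing $\bar y = 0$ by fixed-point-freeness. Hence $y \in A$, and so $C_L(x)=A$, which is abelian.

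Suppose instead $x \notin A$, so $\bar x \neq 0$ in $L/A$. Given $y, z \in C_L(x)$, the images $\bar y, \bar z$ both lie in $C_{L/A}(\bar x)$, which is abelian because $L/A$ is CA and $\bar x \neq 0$. Therefore $[\bar y,\bar z]=0$, i.e.\ $[y,z] \in A$. The Jacobi identity then yields
\[
    [x,[y,z]] \;=\; [[x,y],z] + [y,[x,z]] \;=\; 0,
\]
so $\bar x$ fixes the element $[y,z] \in A$. Since $\bar x \neq 0$, fixed-point-freeness forces $[y,z] = 0$, and $C_L(x)$ is abelian.

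The argument is short, and I do not foresee a serious obstacle. The one subtlety to keep straight is the direction in which fixed-point-freeness is applied in the two cases: in the first case the ``test vector'' in $A$ is $x$ itself and $\bar y$ is the element forced to vanish, whereas in the second case it is the bracket $[y,z] \in A$ that vanishes once $\bar x \neq 0$ has been secured. Both uses of the hypothesis are essential, and the Jacobi identity is the only nontrivial ingredient beyond the definitions.
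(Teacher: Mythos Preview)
Your argument is correct. The paper does not actually supply a proof of this lemma; it is quoted as \cite[Prop.~2.2]{KlMo10} and left unproved, so there is no ``paper's own proof'' to compare against. Your case split on whether $x\in A$, together with the Jacobi-identity step showing $[x,[y,z]]=0$ when $y,z\in C_L(x)$, is exactly the direct verification one would expect and matches the standard argument.
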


\subsection{Solvable Lie algebras}
\begin{prop}
 If $L$ is solvable and CA, then $L$ is either abelian, or metabelian of the form $C \ltimes N$, where $N$ and $C$ are abelian with $\dim C \leq \dim N$ and $C$ acts fixed-point-freely on $N$.
\end{prop}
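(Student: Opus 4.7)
The plan is to reduce to the case $Z(L)=0$, extract a self-centralizing abelian ideal $N$ containing $L'$, and split off an abelian complement using a Cartan subalgebra.

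Assume $L$ is nonabelian. Any nonzero central element $z$ would give $C_L(z)=L$, contradicting CA unless $L$ is abelian; hence $Z(L)=0$. Since $L$ is solvable and $\mathrm{char}(k)=0$, Lie's theorem guarantees that $L'$ is nilpotent, and by Lemma~\ref{l:indecomposable} every subalgebra of $L$ is itself CA. A nilpotent CA Lie algebra $M$ must be abelian: if $M\neq 0$ then its nonzero center contains some $z$, and $C_M(z)=M$ is then forced to be abelian by CA. Thus $L'$ is a nonzero abelian ideal.

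I would next fix $N$ to be a maximal abelian ideal of $L$ containing $L'$; then $L/N$ is a quotient of $L/L'$ and is abelian. Because $N$ is abelian, $N\subseteq C_L(N)$, and for any fixed nonzero $n\in N$ the inclusion $C_L(N)\subseteq C_L(n)$ shows $C_L(N)$ to be a subalgebra of the abelian algebra $C_L(n)$. Hence $C_L(N)$ is an abelian ideal containing $L'$, and maximality forces $C_L(N)=N$. The induced action of $L/N$ on $N$ is then fixed-point-free: if $[x,n]=0$ with $n\neq 0$, then $x$ and $N$ both lie in the abelian algebra $C_L(n)$, so $[x,N]=0$, giving $x\in C_L(N)=N$ and $\bar x=0$ in $L/N$.

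For the splitting I would pick a Cartan subalgebra $H$ of $L$; this is available because $k$ is an infinite field of characteristic zero. Such $H$ is nilpotent and self-normalizing, and as a subalgebra of $L$ it is CA, hence abelian by the argument above. Standard Cartan theory in characteristic zero (generalized weight space decomposition performed over $\bar k$ and descended, using that every nonzero weight space lies in $[H,L]$) yields $L=H+L'$, so $L=H+N$. If some $h\in H\cap N$ were nonzero, then both $H$ and $N$ would sit inside the abelian algebra $C_L(h)$, forcing $L=H+N\subseteq C_L(h)$ and $L$ abelian, a contradiction. Therefore $H\cap N=0$ and $L=H\ltimes N$ with $C:=H$ and $N$ both abelian. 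Finally, fixing a nonzero $n\in N$, the $k$-linear map $c\mapsto [c,n]$ from $C$ to $N$ is injective by fixed-point-freeness, whence $\dim C\leq \dim N$.

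I expect the main technical point to be assembling the splitting $L=H\ltimes N$. The three ingredients must combine: $H$ abelian (from CA plus nilpotency of a Cartan subalgebra), the Cartan identity $L=H+L'$ (standard in characteristic zero but needing a short descent from $\bar k$), and $H\cap N=0$ (a direct CA argument). Once these are in place, the fixed-point-freeness and the dimension inequality follow formally from the CA hypothesis applied through the centralizer $C_L(n)$.
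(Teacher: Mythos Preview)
Your proof is correct. The paper's own argument is much terser: it cites \cite[Th.~2.3]{KlMo10} for the entire structural assertion (solvable nonabelian CA $\Rightarrow$ $C\ltimes N$ with both factors abelian and the action fixed-point-free) and supplies only the dimension inequality, via exactly the injectivity of $c\mapsto [c,n]$ that you use. Your route is therefore genuinely different in that you give a self-contained proof of the cited structure result, building it from a maximal abelian ideal $N\supseteq L'$ (self-centralizing by CA) together with a Cartan complement $H$ (abelian because nilpotent and CA). What you gain is independence from the reference; what the paper gains is brevity.

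One small simplification: the identity $L=H+L'$ requires no passage to $\bar k$ and no descent. The Fitting decomposition $L=L_0(h)\oplus L_1(h)$ for a regular element $h$ already holds over the infinite field $k$, with $L_0(h)=H$ and $\mathrm{ad}\,h$ invertible on $L_1(h)$; hence $L_1(h)=[h,L_1(h)]\subseteq L'$ directly.
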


\begin{proof}
Apart from the inequality on dimensions all can be found in \cite[Th.~2.3]{KlMo10}. Now, since $C$ acts fixed-point-freely on $N$, if $y_1, \ldots, y_n \in C$ are linearly independent, then for any $x$ in $N \smallsetminus \{0\}$ the elements
 $[x,y_1], \ldots, [x,y_n] \in N$ are also linearly independent. Indeed, from $\sum_{i=1}^n a_i [x,y_i]=0$ with $a_i \in k$, we get that $\sum_{i=1}^n a_i y_i$ fixes $x$, 
 which by fixed-point-freeness gives us $a_i=0$ for all $i$. This proves the inequality.
\end{proof}

Notice that any possible dimensions in the proposition above do occur: given $0< r \leq n$, find a field extension $K$ of $k$ of degree $n$, and consider the Lie $K$-algebra with basis $\{x,y\}$ and bracket $[x,y]=y$. It is CA, and as a Lie $k$-algebra it is of the form $k^n \ltimes k^n$. Clearly it contains subalgebras of the form $k^r \ltimes k^n$, and these must be CA too.

\subsection{Simple Lie algebras}
Recall first that the \textit{$k$-rank} of a semisimple Lie algebra over a field $k$ of characteristic zero is the dimension of any of its maximal split toral subalgebras.
Recall also that a Lie algebra is called \textit{absolutely simple} if its scalar extension to each field extension is simple. 
By passing to the centroid $K$ of $L$, the next lemma leaves us with the investigation of the CA property for absolutely simple Lie $K$-algebras of $K$-rank at most one.

\begin{lemma}
Let $L$ be a nonabelian reductive Lie algebra over $k$. If $L$ is CA, then it is simple of $k$-rank at most $1$.
\end{lemma}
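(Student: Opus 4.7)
The plan runs two reductions driven by the CA hypothesis: first reduce $L$ to a simple Lie algebra, then show its $k$-rank cannot exceed $1$.

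For simplicity, recall that in characteristic zero a reductive Lie algebra splits as an ideal direct sum $L=Z(L)\oplus[L,L]$, with $[L,L]$ further splitting into a direct sum of simple ideals. By Lemma~\ref{l:indecomposable} the nonabelian CA algebra $L$ is directly indecomposable, which rules out a nontrivial center (otherwise $[L,L]=0$ and $L$ would be abelian) and forces $[L,L]=L$ to be simple.

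Now assume for contradiction that $L$ has $k$-rank $\geq 2$. Fix a $k$-split toral subalgebra $T\subseteq L$ with $\dim_k T\geq 2$ and consider the relative root space decomposition
\[ L = Z_L(T)\oplus\bigoplus_{\alpha\in\Phi}L_\alpha, \]
with $\Phi$ nonempty because $L$ is nonabelian. Pick any $\alpha\in\Phi$ and any nonzero $h$ in $\ker(\alpha)\cap T$, which is nontrivial since $\dim_k T\geq 2$ and $\ker\alpha$ has codimension one in $T$. Then $L_\alpha\oplus L_{-\alpha}\oplus Z_L(T)\subseteq C_L(h)$.

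The CA hypothesis forces $C_L(h)$ to be abelian, so it suffices to produce noncommuting elements inside it. For this I need $[L_\alpha,L_{-\alpha}]\neq 0$, which follows from the nondegeneracy of the Killing form $\kappa$ of the semisimple $L$ together with the standard identities $\kappa(L_\beta,L_\gamma)=0$ for $\beta+\gamma\neq 0$ and $\kappa([e,f],h')=\alpha(h')\,\kappa(e,f)$ for $e\in L_\alpha$, $f\in L_{-\alpha}$, $h'\in T$: choose $e\neq 0$, pick $f\in L_{-\alpha}$ with $\kappa(e,f)\neq 0$ (possible by nondegeneracy of the induced pairing $L_\alpha\times L_{-\alpha}\to k$), and any $h'\in T$ with $\alpha(h')\neq 0$; then $[e,f]\neq 0$ lies in $Z_L(T)\subseteq C_L(h)$, so $e$, $f$ and $[e,f]$ form a nonabelian configuration in $C_L(h)$, contradicting CA. Hence the $k$-rank is at most $1$. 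The mild technical point is the verification that $[L_\alpha,L_{-\alpha}]\neq 0$; the Killing form argument above is the cleanest route, but one could equally invoke the structure theory of relative root systems directly.
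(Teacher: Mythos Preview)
Your proof is correct and follows the same overall strategy as the paper: reduce to simple via Lemma~\ref{l:indecomposable}, then in rank $\geq 2$ pick a nonzero $h\in\ker(\alpha)\cap T$ and exhibit noncommuting elements in $C_L(h)$.

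The only difference is in the witness to nonabelianness. You work to show $[L_\alpha,L_{-\alpha}]\neq 0$ via the Killing form pairing, which is valid but unnecessary. The paper's argument is shorter: since $T\subseteq C_L(h)$ and $L_\alpha\subseteq C_L(h)$, just take any $h'\in T$ with $\alpha(h')\neq 0$ (equivalently, the $h_\alpha$ from a Jacobson--Morozov triple) and any nonzero $x_\alpha\in L_\alpha$; then $[h',x_\alpha]=\alpha(h')x_\alpha\neq 0$ while $[h,h']=0=[h,x_\alpha]$, already contradicting CA. Your Killing form detour establishes more than is needed.
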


\begin{proof}
By Lemma~\ref{l:indecomposable}, $L$ must be simple. 
Let $S$ be a maximal split toral subalgebra of $L$ and $\alpha$ be any root of $S$ in $L$. 
By the Jacobson-Morozov theorem and the weight decomposition (cf.~\cite[Prop.~3.1.9]{Sch17}) there is $h_{\alpha}$ in $S$ such that for any nonzero $x_{\alpha}$ in the weight space of $\alpha$ one has $[h_{\alpha},x_{\alpha}]=2 x_{\alpha}$. 
If $\dim S$  were $\geq 2$, there would be a nonzero element $s$ in $\Ker(\alpha)$. 
Then
 $[h_{\alpha},s]=0=[s,x_{\alpha}]$, but $[h_{\alpha},x_{\alpha}]\neq 0$, which contradicts the CA condition.
\end{proof}

Recall that the \emph{anisotropic kernel} $A(L)$ of a semisimple Lie algebra $L$ is the derived subalgebra of the centralizer $Z_L(S)$ of a maximal split toral subalgebra $S$ of $L$. It is a semisimple Lie subalgebra of $L$ and, as the name suggests, it is itself an anisotropic Lie algebra. When $A(L)=0$, we say that $L$ is \emph{quasi-split}.

\begin{prop}
Let $L$ be an absolutely simple Lie algebra over $K$ of $K$-rank one. Then $L$ is commutative transitive if and only if 
$L$ is a form of $\mf{sl}_2$.
\end{prop}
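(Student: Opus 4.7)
The plan is to argue via the relative root space decomposition with respect to a one‐dimensional maximal split toral subalgebra $S \subseteq L$. Since $L$ has $K$-rank one, the relative root system is either of type $A_1$ or of type $BC_1$, so I would write $L = L_0 \oplus \bigoplus_{\alpha} L_\alpha$ with $L_0 = Z_L(S)$, and then push the CA hypothesis through this decomposition.

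For the implication ``form of $\mathfrak{sl}_2$ $\Rightarrow$ CA'', I would observe that such an $L$ has absolute rank one, so $\dim_K L = 3$ and $L \otimes_K \bar K \cong \mathfrak{sl}_2(\bar K)$. The dimension of a centralizer is preserved under scalar extension, and every nonzero element of $\mathfrak{sl}_2(\bar K)$ has a one‐dimensional centralizer; hence $C_L(x) = K\cdot x$ for every $0 \neq x \in L$, which gives CA.

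For the converse, assume $L$ is CA. The first step is to show that $L$ is quasi-split: for any $0 \neq s \in S$, all relative roots take nonzero values on $s$ because $\dim S = 1$, so $C_L(s) = L_0$; CA then forces $L_0$ to be abelian, and therefore the anisotropic kernel $A(L) = [L_0,L_0]$ vanishes. The second step is to rule out the relative type $BC_1$: if $2\alpha$ were a relative root, then for $0 \neq z \in L_{2\alpha}$ one would have $[L_\alpha,z] \subseteq L_{3\alpha} = 0$ and $[L_{2\alpha},z] \subseteq L_{4\alpha} = 0$, so $L_\alpha \oplus L_{2\alpha} \subseteq C_L(z)$; since $[L_\alpha,L_\alpha] = L_{2\alpha} \neq 0$ (a standard feature of relative root systems of type $BC$), this subspace is non-abelian, contradicting that $C_L(z)$ be abelian. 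Hence the relative root system is $A_1$.

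Combining the two steps, $L_0 = S$ is one-dimensional and abelian, and since $L$ is quasi-split, $Z_L(S)$ is a Cartan subalgebra of $L$ which remains a Cartan after scalar extension; consequently $L \otimes_K \bar K$ has absolute rank one and must be $\mathfrak{sl}_2(\bar K)$, so $L$ is a $K$-form of $\mathfrak{sl}_2$. The main subtlety I expect is the clean invocation of $[L_\alpha,L_\alpha] = L_{2\alpha}$ in the $BC_1$ case; this is well known from the theory of relative root systems but is the one nontrivial structural input, and everything else reduces to elementary weight-space computations together with the standard identification of $Z_L(S)$ as a Cartan in the quasi-split setting.
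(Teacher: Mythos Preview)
Your approach differs from the paper's and is in some ways more conceptual. The paper first shows $L$ is quasi-split (as you do), but then invokes a $p$-adic classification result from \cite{Sch17} to conclude that a non-split quasi-split absolutely simple algebra of $K$-rank one must be a specific outer form of $\mathfrak{sl}_3$, and finally exhibits three explicit matrices in that realization witnessing the failure of CA. Your argument instead rules out relative type $BC_1$ directly by producing a non-abelian centralizer inside $L_\alpha\oplus L_{2\alpha}$, and this works over any field of characteristic zero rather than only $p$-adic fields. The forward direction (form of $\mathfrak{sl}_2$ $\Rightarrow$ CA) is essentially the same in both proofs.

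There is, however, a genuine gap in your concluding step. From ``quasi-split'' and ``relative type $A_1$'' you assert that ``$L_0 = S$ is one-dimensional'', but this does not follow from what you have established: quasi-split only gives that $L_0 = Z_L(S)$ is an abelian Cartan subalgebra, and $\dim_K L_0$ equals the \emph{absolute} rank of $L$, which a priori could exceed $\dim_K S = 1$. (Indeed, the very $\mathfrak{sl}_3$-form treated in the paper has $\dim L_0 = 2$; it is only your $BC_1$ argument that excludes it, not the quasi-split step.) What is missing is the argument that an absolutely simple quasi-split algebra of relative type $A_1$ must have absolute type $A_1$. One clean way to fill this: in the quasi-split case the absolute simple roots form a single Galois orbit (since the relative rank is one), and if two roots in that orbit were adjacent in the absolute Dynkin diagram their sum would be an absolute root restricting to $2\alpha$, forcing $L_{2\alpha}\neq 0$ and hence relative type $BC_1$; connectedness of the diagram ($L$ absolutely simple) then leaves exactly one absolute simple root. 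With this addition your proof is complete.
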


\begin{proof}
Let $S$ be a maximal split toral subalgebra of $L$. 
The CA condition imposes $A(L)=0$, that is, $L$ is quasi-split.
If $L$ is actually split, then clearly $L$ is a form of $\mf{sl}_2$. 
Now, according to \cite[Sec.~4.6]{Sch17}, if $L$ is non-split quasi-split of rank $1$, then it can only be a form of $\mf{sl}_3$ with the following realization for some choice of quadratic extension (see also \cite[Ex.~3.2.10]{Sch17}).

For $F|K$ a quadratic extension, choose $y \in F \smallsetminus K$ with $y^2 \in K$. We consider the algebra of matrices of the form
\[ \begin{pmatrix}
    a+by & c+dy & ey \\
    f+gy & -2by & -c+dy \\
    hy & -f+gy & -a+by
   \end{pmatrix}
\]
with $a,b,c,d,e,f,g,h \in K$, endowed with the usual operations. 
This algebra is not CA, which may be verified by looking at the following triple of elements:
\[ \begin{pmatrix}
    y & 0 & 0 \\
    0 & -2y & 0 \\
    0 & 0 & y
   \end{pmatrix},
   \begin{pmatrix}
    0 & 0 & y \\
    0 & 0 & 0 \\
    0 & 0 & 0
   \end{pmatrix} \hbox{ and }
   \begin{pmatrix}
    0 & y & 0\\
    0 & 0 & y \\
    0 & 0 & 0
   \end{pmatrix}.
\]

Conversely, suppose $L$ is a form of $\mf{sl}_2$. 
Evidently, if a scalar extension of a Lie algebra is CA, then the algebra itself is CA. 
Note that $\mf{sl}_2(F)$ is CA if and only if the characteristic of the field $F$ is $\neq 2$.  
\end{proof}

Finally, in the case of $K$-rank zero, that is, of anisotropic Lie algebras, we have the following.

\begin{prop}  \label{LieDivisionAlg}
Let $L$ be an absolutely simple anisotropic Lie algebra over $K$. Then $L$ is commutative transitive if and only if $L$ is the derived Lie subalgebra of a central division $K$-algebra of prime degree.
\end{prop}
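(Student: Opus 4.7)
The plan is to combine two ingredients. First, the classification of absolutely simple anisotropic Lie algebras over a $p$-adic field $K$: by Kneser's theorem (equivalently, by the Bruhat--Tits/Tits classification over non-archimedean local fields), every absolutely simple anisotropic algebraic $K$-group is of inner type $A$, i.e., of the form $\Sl_1(D)$ for some central division $K$-algebra $D$. On the Lie side this means $L\simeq [D,D]=\Ker(\mathrm{Trd})$, where $\mathrm{Trd}\colon D\to K$ denotes the reduced trace. Second, a direct centralizer computation inside $D$ based on the double centralizer theorem, together with the fact that over a $p$-adic field $D$ contains subfields of every degree dividing $\deg(D)$ (via unramified subfields). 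With this reduction, we only need to show that for $L=[D,D]$, being CA is equivalent to $\deg(D)$ being prime.

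For the forward direction, suppose $n:=\deg(D)$ admits a proper divisor $d$ with $1<d<n$. Then $D$ contains a subfield $F$ of degree $d$ over $K$; pick a primitive element $x\in F$ so that $K[x]=F$. The double centralizer theorem shows that $C_D(x)=C_D(F)$ is a central simple $F$-algebra of degree $n/d>1$, which is a division algebra because $D$ is. In particular $C_D(x)$ is non-commutative, and therefore its commutator space $[C_D(x),C_D(x)]$ is a non-abelian Lie $K$-subalgebra. Since $[C_D(x),C_D(x)]\subseteq [D,D]\cap C_D(x)=C_L(x)$, the centralizer $C_L(x)$ fails to be abelian, contradicting the CA hypothesis. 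Hence $n$ must be prime.

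For the converse, suppose $\ell:=\deg(D)$ is prime and take a nonzero $x\in L$. Since $\mathrm{Trd}$ restricted to $K$ is multiplication by $\ell$ and $\mathrm{char}\,K=0$, one has $L\cap K=0$, so $x\notin K$. Thus $K[x]$ is a proper field extension of $K$ sitting inside $D$, whose degree divides $\ell$; by primality $[K[x]:K]=\ell$, so $K[x]$ is a maximal subfield. The double centralizer theorem then gives $C_D(x)=K[x]$, which is commutative. Consequently
\[ C_L(x)=L\cap C_D(x)=L\cap K[x] \]
is a Lie subalgebra of the commutative $K$-algebra $K[x]$ and so has trivial bracket. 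This shows $L$ is CA.

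The main obstacle is the first step: invoking the classification that forces an absolutely simple anisotropic Lie algebra over $K$ to be of the form $[D,D]$. This is a non-trivial input from the theory of algebraic groups over local fields, but once granted, the rest reduces to standard facts about central simple algebras (the existence of subfields of every divisor degree, together with the double centralizer theorem), so the centralizer analysis itself is essentially routine.
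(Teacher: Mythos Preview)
Your argument is essentially sound, but there is one small gap in the forward direction: you choose a primitive element $x$ of the intermediate field $F$, but you have not ensured that $x\in L=[D,D]$, and the CA hypothesis only constrains centralizers of nonzero elements \emph{of} $L$. This is easily repaired: replace $x$ by $x-\tfrac{1}{n}\mathrm{Trd}(x)$, which still generates $F$ over $K$ (it differs from $x$ by an element of $K$) and has reduced trace zero, hence lies in $L$. With this adjustment your double-centralizer argument goes through unchanged, since $C_D(x-\lambda)=C_D(x)$ for $\lambda\in K$.

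Your route differs from the paper's in the ``not prime $\Rightarrow$ not CA'' direction. The paper exploits the fact that every $p$-adic central division algebra is cyclic: writing $D=W\oplus Wx\oplus\cdots\oplus Wx^{n-1}$ for a cyclic extension $W|K$ with Galois generator $\sigma$, it identifies $L$ explicitly as $W_0\oplus Wx\oplus\cdots\oplus Wx^{n-1}$ (with $W_0$ a $K$-complement to $K$ in $W$) and then, for a proper divisor $i>1$ of $n$, exhibits elements $a,b\in W_0$ with $[a,b]=0=[b,bx^i]$ but $[a,bx^i]\neq 0$ by a one-line commutator computation. Your approach via the double centralizer theorem and the existence of unramified subfields of every divisor degree is more structural and avoids the cyclic presentation entirely; the paper's approach is more hands-on and makes the witnesses to the failure of CA completely explicit. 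For the converse (prime degree $\Rightarrow$ CA) the two arguments are essentially identical.
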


\begin{proof}
From Veisfeiler's classification of semisimple $p$-adic Lie algebras (\cite[Th.~2]{Veisfeiler:64}) we have that $L$ is an inner form of type $A$. 
More precisely, by \cite[Prop.~4.5.21]{Sch17}, such $L$ of type $A_{n-1}$ is isomorphic to the derived Lie subalgebra $[D,D]$ of a central division $K$-algebra $D$ (endowed with the bracket given by $[a,b] = ab - ba$) of degree $n$.

The case $n=2$ means that a scalar extension of $L$ is $\mf{sl}_2$, so that $L$ is clearly CA.

Suppose $n$ is a prime $>2$. 
Note that the Lie-algebraic and the associative-algebraic centralizers of an element in $D$ coincide.
If $d\in D\smallsetminus K$, then the associative subalgebra generated by $d$ is a field  extension of $K$ of degree $n$ which is contained in the centralizer of $d$ in $D$, so these two subalgebras coincide.
Since $D$ is reductive (because its scalar extension to a splitting field of $D$ is $\mathfrak{gl}_n$) it follows that $[D,D]$ trivially intersects $Z(D)=K$. 
Thus $L = [D,D]$ is CA.

Next we use the fact that each  finite dimensional simple division algebra over a $p$-adic field is \emph{cyclic}
(see \cite[p.~184, Sec.~IX-4]{Weil:67}).
In particular, for some cyclic extension $W|K$ of degree $n$ and some $x $ in $D$ we have
\[ D = W \oplus W x \oplus \ldots \oplus W x^{n-1}, \]
and for $z$ in $W$ the multiplication satisfies $x z =  {\sigma}(z) x$, where $\sigma$ is a generator of the Galois group of $W|K$.
We claim that the derived subalgebra $L$ of $D$ can be written as a vector space over $K$ as
\[ L = W_0 \oplus W x \oplus \ldots \oplus W x^{n-1},\]
where $W_0$ is a complement to $K$ in $W$.
Indeed, for $z,z'\in W$ and $1 \leq i \leq n-1$, we have
\begin{equation} \tag{*} 
[z,z'x^i]= (z-\sigma^i(z))z' x^i\, .
\end{equation}
By taking $z$ not fixed by $\sigma^i$ and $z'$ any multiple of ${(z-\sigma^i(z))^{-1}}$, we see that $L$ contains $W x \oplus \ldots \oplus W x^{n-1}$. 
To establish the claim note that $\dim_K L=n^2-1$ (because $L$ is a form of $\mathfrak{sl}_n$).

Finally, suppose that $n$ is not a prime. 
Let $i>1$ be a proper divisor of $n$ and let $M$ be the fixed field of $\langle \sigma^i \rangle$. 
Pick $a \in W_0 \smallsetminus M \cap W_0$ and $b\in M \cap W_0 \smallsetminus \{0\}$. Notice that such choices are possible, since $M \cap W_0$ has codimension one in $M$. Then $a,b,bx^i\in L$, $[a,b]=0$ and, by equation (*), $[b,bx^i]=0\neq [a,bx^i]$. 
So $L$ is not CA.
\end{proof}

\subsection{Proof of Theorem \ref{t:CTLieAlgebras} and its corollaries}
\begin{proof}[Proof of Theorem \ref{t:CTLieAlgebras}] 
%\ignore{\obeylines \footnotesize} 
By Levi's theorem $L$ is the semidirect product of a subalgebra $Q$ of $L$ and the solvable radical $R$ of $L$. 
Then both $Q$ and $R$ are CA and $Q$ is semisimple. 
In virtue of Lemma~\ref{l:f-p-fExt}, to conclude the proof it remains to consider the case when both $R$ and $Q$ are nonzero.

Let $A$ be the largest nilpotent ideal of $R$ and let $Q$ act on $R$ by restriction of the adjoint action.
Since $[L,R]\subseteq A$ (\cite[Prop. I.5.2.6]{BourbakiLIE}), by complete reducibility one has $R = A \oplus B$ for some $Q$-invariant complement $B$. 
Thus, $[Q,B]\subseteq A\cap B=\{0\}$. 
If $B$ were $\neq\{0\}$, then $[Q,Q]=\{0\}$ by CA.
Therefore $R=A$. 

So $L = Q \ltimes A$, with abelian $A$, since $A$ is the largest nilpotent ideal of the Lie algebra $L$ which is CA. 
\medskip
Each nilpotent element of $Q$ acts via a nilpotent matrix of $\mf{gl}(A)$, so it is zero.
Thus $Q$ is an anisotropic $\mf{sl}_\ell(K)$-form for some prime $\ell$ and some extension $K$ of $k$. 
To see that in this case the action is indeed fixed-point-free, consider any non-trivial element $q\in Q$ and suppose that $q\cdot a=0$ for some $a \neq 0$. 
Then actually $q \cdot b=0$ for all $b \in A$. 
Moreover, for any $q' \in Q$ and $b \in A$ we have:
\[ [q,q']\cdot b = q\cdot(q' \cdot b) -q'\cdot(q \cdot b) = 0. \]
It follows that the ideal generated by $q$ commutes with $A$. 
By simplicity, $[A,Q]=0$, which is a contradiction. \qedhere
\end{proof}

\begin{proof}[Proof of Corollary \ref{quot}]
%\ignore{\obeylines \footnotesize} 
Let us analyse each case of Theorem~\ref{t:CTLieAlgebras}. 
If $L$ is abelian or simple, there is nothing to do; so let $I$ be a non-zero ideal of $L$. 

Suppose $L = C \ltimes N$ is metabelian. 
If $x\in I \smallsetminus N$, then $I$ contains $N$. Indeed, $x$ acts with an isomorphism on $N$, so every $a\in N$ is of the form $[x,a']$ for some $a' \in N$. This implies that $L/I$ is isomorphic to a subalgebra of $L/N\simeq C$, which is abelian.  
If $I \subseteq N$, then $L/I \simeq  C \ltimes (N/I)$, and it is easily seen that the action of $C$ on $N/I$ is fixed-point-free.

Finally, suppose $L=Q \ltimes R$ with abelian $R$ and simple $Q$ as in case (c). 
If $x\in I \smallsetminus R$, then the quotient $L/I$ is abelian, since $Q$ is simple. 
If $I \subseteq R$, then, since $I$ is closed under the action of $Q$, we can find a complement $J$ by complete reducibility. 
Thus $L/I \simeq Q \ltimes J$, which is clearly another algebra of type (c). \qedhere
\end{proof}

\begin{proof}[Proof of Corollary \ref{GpQuot}]
Let $N$ be a normal Lie subgroup of $G$ and let $U$ be a closed finite index subgroup of $G$ such that $U$ is uniformly powerful pro-$p$ and CA. 
Then $L(U)$ and hence $L(G)$ is a CA finite dimensional Lie algebra over $\mathbb{Q}_p$. 
Since $G\to G/N$ induces an isomorphism $L(G)/L(N)\to L(G/N)$, some uniformly powerful pro-$p$ closed finite index subgroup of $G/N$ must be CA. \qedhere
\end{proof}

\subsection{$p$-adic Lie groups with CA Lie algebras}

A $p$-adic analytic group having corresponding Lie algebra with the CA property need not be CA; e.g., the {smallest torsion-free pro-$p$ group non-CA}, $\mathbb{Z}_2\ltimes \mathbb{Z}_2$, where the action is by inversion outside of the kernel $=2\mathbb{Z}_2$; 
it is a pro-2 group of rank $2$ and dimension $2$ (since it is {virtually-$\mathbb{Z}_2^2$}) whose corresponding Lie algebra over $\mathbb{Q}_p$ is %$2$-dimensional thus 
CA. 

Let $G$ be a $p$-adic analytic group with Lie algebra $L(G)$.
If $L(G)$ is CA, then $G$ is virtually CA.
Indeed, let $U$ be a uniformly powerful open subgroup of $G$;
from the Campbell-Baker-Hausdorff formula it follows that its corresponding $\mathbb{Z}_p$-Lie lattice $L_U$ is CA if and only if so is $U$.
Conversely, if $G$ is CA then so is $L(G)$; for, $L(G)=\mathbb{Q}_p \otimes _{\mathbb{Z}_p} L_U$ where $\mathbb{Q}_p$ is the field of fractions of $\mathbb{Z}_p$ and $L_U$ is $p$-torsion-free.
More precisely:

\begin{prop}
\label{prop:powful}

\noindent
{\rm (a)}
Let $G$ be a uniformly powerful pro-$p$ group.
Then $G$ is CA if, and only if, $L(G)$ is a CA $\Q_p$-Lie algebra.

\noindent{\rm (b)}
If $M$ is a CA $\Q_p$-Lie algebra and $\Lambda\subseteq M$ is 
$\Z_p$-sublattice of $M$, then $p\Lambda\subseteq M$ is a powerful
$\Z_p$-sublattice of $M$, and $G=(p\Lambda,\phi_h)$, where $\phi_h\colon\Lambda\times\Lambda\to\Lambda$ is given by substitution of the Baker-Campbell-Hausdorff element, is a uniformly
powerful pro-$p$ group, which is CA.
\end{prop}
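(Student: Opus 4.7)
The plan is to exploit the Lazard correspondence between uniformly powerful pro-$p$ groups $U$ and their $\Z_p$-Lie lattices $L_U$ \cite[Ch.~4 and Ch.~9]{DDMS:padic}, using that the group law on $U$ is recovered from the Lie bracket of $L_U$ via the (convergent) Baker--Campbell--Hausdorff (BCH) series.

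For part (a), the key step I would isolate first is the translation of commutation across the logarithm: for $g,h \in U$, one has $gh = hg$ if and only if $[\log g, \log h] = 0$ in $L_U$. The implication ``$\Leftarrow$'' is immediate from BCH, since $[\log g, \log h]=0$ forces every iterated Lie bracket in $\log g$ and $\log h$ to vanish, whence $\log(gh) = \log g + \log h = \log(hg)$; the implication ``$\Rightarrow$'' comes from comparing the BCH expansions of $gh$ and $hg$ and using injectivity of $\log$. Once this is in hand, for each $g \in U$ one has $\log C_U(g) = C_{L_U}(\log g)$ as sets, and a subset of $U$ is pairwise commuting iff its image under $\log$ is pairwise Lie-commuting; hence $U$ is CA iff $L_U$ is CA. To bridge between $L_U$ and $L(U) = \Q_p \otimes_{\Z_p} L_U$, I would use that every nonzero element of $L(U)$ is a $\Q_p^\times$-multiple of a nonzero element of $L_U$, that $C_{L(U)}(u) = \Q_p \otimes_{\Z_p} C_{L_U}(u)$ for $u \in L_U$, and that a $\Z_p$-sublattice is Lie-abelian iff its $\Q_p$-span is.

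For part (b), I would first verify that $p\Lambda$ is a powerful $\Z_p$-Lie sublattice: since $\Lambda$ is closed under the bracket, $[p\Lambda, p\Lambda] = p^{2}[\Lambda,\Lambda] \subseteq p^{2}\Lambda \subseteq p(p\Lambda)$, with $4\Lambda$ in place of $p\Lambda$ covering the case $p = 2$. As $\Lambda$ is a finitely generated free $\Z_p$-module, so is $p\Lambda$, and the Lie lattice is therefore uniform in the sense of \cite[Ch.~9]{DDMS:padic}. Consequently $G = (p\Lambda, \phi_h)$ is a well-defined uniformly powerful pro-$p$ group whose associated $\Q_p$-Lie algebra is $L(G) = \Q_p \otimes_{\Z_p}(p\Lambda) = M$, which is CA by hypothesis. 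Part (a) applied to $G$ then yields that $G$ is CA.

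The principal technical point is the ``commute iff bracket vanishes'' equivalence used in part (a); although folklore and a direct consequence of BCH, it is the hinge on which the rest rests and must be justified carefully from the Lazard dictionary of \cite{DDMS:padic}. Everything else, including part (b), is essentially bookkeeping once that equivalence is in place.
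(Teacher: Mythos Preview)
Your proposal is correct and follows the same route as the paper, which in fact gives only a one-sentence sketch in the paragraph preceding the proposition: it simply asserts that BCH yields the equivalence $U$ CA $\Leftrightarrow$ $L_U$ CA, and then passes to $L(G)=\Q_p\otimes_{\Z_p}L_U$ using that $L_U$ is $p$-torsion-free. Your write-up expands exactly these two steps, correctly isolating the equivalence $gh=hg \Leftrightarrow [\log g,\log h]=0$ as the crux; your remark that the forward implication needs a genuine argument (not merely injectivity of $\log$) is well taken, and your handling of part~(b), including the replacement of $p\Lambda$ by $4\Lambda$ when $p=2$ to secure the powerful condition $[L,L]\subseteq 4L$, is a sound clarification of a point the paper leaves implicit.
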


The case when $L(G)$ is a nonabelian CA reductive Lie algebra is the most interesting one (Theorem~\ref{t:CTLieAlgebras}(b)). %In fact, $L(G)$ must be simple. 
This is the case for any non-soluble {just-infinite} $p$-adic analytic pro-$p$ group $G$ such that $L(G)$ is CA. 
Moreover, regarding Lie's third theorem, one has that each CA simple Lie algebra over $\mathbb{Q}_p$ is isomorphic to $L(G)$ for some {just-infinite} $p$-adic analytic pro-$p$ group $G$.

Let $A$ be a finite dimensional central simple algebra over a field $K$.
By definition, $\textup{SL}_1(A)$ consists of the elements in $A^\times$ of \textit{reduced norm} $=1$ in $K^\times$ (see \cite[Ch.~IX-2]{Weil:67} for the italicized term).
The following proposition gives, for instance, that the $p$-adic analytic group $\textup{SL}_1(A)$ is CA where $A$ a central division algebra over $\mathbb{Q}_p$ of dimension $p^2$ with $p$ an odd prime. We begin with an immediate lemma.

\begin{lemma}
\label{l:units'}
Let $A$ be a ring with the property that 
the centralizer of each noncentral element in $A$ is commutative.
Then every subgroup of $A^\times$ intersecting the center of $A$ trivially is CA.
\hfill \qedsymbol
\end{lemma}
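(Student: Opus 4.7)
The plan is almost tautological: the hypothesis on $A$ is a ring-theoretic strengthening of the group-theoretic CA condition, so the proof consists of translating between the two notions of centralizer. Concretely, fix a subgroup $H\leq A^\times$ with $H\cap Z(A)=\{1\}$, and take an arbitrary non-identity $h\in H$. The triviality of $H\cap Z(A)$ forces $h\notin Z(A)$; this is the sole place where that hypothesis enters, since otherwise the ring-theoretic assumption would have nothing to say about $h$.

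Once $h$ is known to be noncentral in $A$, the hypothesis provides that the associative centralizer $C_A(h)=\{a\in A : ah=ha\}$ is a commutative subring of $A$. Because the group-theoretic centralizer satisfies $C_H(h)\subseteq C_A(h)\cap A^\times$ — every element of $H$ commuting with $h$ in the group sense commutes with $h$ in the ring sense — the subgroup $C_H(h)$ inherits commutativity from the ambient commutative set. Since $h\in H\smallsetminus\{1\}$ was arbitrary, $H$ is CA. I do not foresee any genuine obstacle: all the content sits in the hypothesis imposed on $A$, and the reduction from subgroups of $A^\times$ to subrings of $A$ is just the observation that group centralizers are contained in ring centralizers, together with the remark that $H\cap Z(A)=\{1\}$ is precisely what ensures that the ring-theoretic hypothesis is applicable to every non-identity element of $H$.
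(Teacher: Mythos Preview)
Your argument is correct and is exactly the intended one: the paper marks this lemma with a bare \qedsymbol\ and gives no proof, precisely because the deduction you wrote out is immediate from the definitions. There is nothing to add.
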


\begin{prop}
\label{p:SL1'}
Let $K$ be a field with no $n$-th root of unity other than $1$.
If $A$ is a central simple algebra over $K$ of dimension $n^2$ such that every commutative subalgebra other than $K$ is maximal, then the group $\textup{SL}_1(A)$ is CA.
\end{prop}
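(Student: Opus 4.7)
The plan is to invoke Lemma~\ref{l:units'} with the ring $A$ and the subgroup $\textup{SL}_1(A)\subseteq A^\times$. This reduces the proposition to verifying two claims: (i) the centralizer in $A$ of every noncentral element of $A$ is commutative, and (ii) $\textup{SL}_1(A)\cap K^\times=\{1\}$, where $K^\times$ is the group of units of the center $Z(A)=K$.

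For (i), I would fix $a\in A\setminus K$ and consider the commutative $K$-subalgebra $K(a)$ of $A$ generated by $a$. Since $K(a)\supsetneq K$, the standing hypothesis forces $K(a)$ to be a maximal commutative subalgebra of $A$. Now given any $b\in C_A(a)$, the subalgebra $K(a,b)$ generated by $K(a)$ and $b$ is again commutative (because $b$ commutes with $a$ and with all scalars) and contains $K(a)$; by maximality $K(a,b)=K(a)$, so $b\in K(a)$. Hence $C_A(a)=K(a)$, which is commutative.

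For (ii), a scalar $c\in K^\times$ viewed inside $A$ has reduced norm $c^n$, so $c$ lies in $\textup{SL}_1(A)$ precisely when $c^n=1$, i.e., when $c$ is an $n$-th root of unity in $K$. The hypothesis that $1$ is the only such root then gives $c=1$, establishing the trivial intersection. With (i) and (ii) in hand, Lemma~\ref{l:units'} immediately yields that $\textup{SL}_1(A)$ is CA.

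I do not anticipate any serious obstacle here: the proof amounts to matching the two structural hypotheses of the proposition to the two clauses of Lemma~\ref{l:units'}. The only point requiring a touch of care is the observation that $K(a,b)$ is genuinely commutative whenever $b\in C_A(a)$, so that maximality of $K(a)$ collapses $C_A(a)$ down to $K(a)$; everything else is formal.
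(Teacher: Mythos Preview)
Your proposal is correct and follows essentially the same route as the paper's proof: both verify the two hypotheses of Lemma~\ref{l:units'} by showing that for noncentral $a$ the commutative subalgebra $K(a)$ is forced by maximality to equal $C_A(a)$, and that $\textup{SL}_1(A)\cap K^\times$ consists of the $n$-th roots of unity in $K$, hence is trivial.
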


\begin{proof}
First, the ring $A$ has the property stated in Lemma~\ref{l:units'}.
Indeed, if $x\in A \smallsetminus Z(A)$ and $y\in C_A(x)$ then the subalgebras generated by $x$ and by $x$ and $y$ are commutative and must coincide; 
hence they are $C_A(x)$.
Second, $\textup{SL}_1(A)\cap Z(A) = \{x\in K^\times\,|\, x^n=1\}$ (see \textit{loc.~cit.}). %or Ch. III-3.
\end{proof}

Let $Q=Q(a,b,K)$ be a quaternion algebra over a field $K$ of characteristic $\neq 2$, say with basis $\{1,u,v,uv\}$ and such that $u^2=a$ and $v^2=b$ are in $K^\times$ and $vu=-uv$. It follows that a given element $\alpha+\beta u+\gamma v+\delta uv$ commutes with another element $t+xu+yv+zuv$ in $Q$ if and only if $\gamma z=\delta y$, $\delta x =\beta z$ and $\gamma x=\beta y$.
Since the number of linearly independent solutions of this  system of equations on the variables $x$, $y$ and $z$ is $1$, except when $\beta$, $\gamma$ and $\delta$ are all $0$, we see that the Lie algebra $\mathfrak{sl}_1(Q)$ consisting of the 
pure quaternion elements of $Q$, with commutator bracket, 
is CA. 
This illustrates case (b2) of Theorem~\ref{t:CTLieAlgebras}.

Suppose now 
$K=\mathbb{Q}_p$, $b=p>2$, $a$ is an integer $>0$ and $<p$ that is not a square modulo $p$, and let $P$ be the maximal ideal of the maximal $\mathbb{Z}_p$-order $R$ of $Q$. 
The group $G=\textup{SL}_1(R)=\textup{SL}_1(Q)\cap R$ 
is 
not CA since $-1\in Z(G)$. 
Consider the subgroup $\textup{SL}_1^1(D)={\textup{SL}_1(Q)\cap (1+P)}$.
Since $1$ is the only element of it which has pure quaternion part $=0$, it follows from the previous paragraph that this group is CA.
 Therefore, the $p$-adic analytic pro-$p$ group $\textup{SL}_1^1(D) \ltimes R$ has Lie algebra of type (c) in Theorem \ref{t:CTLieAlgebras}.

 Let $D$ be a locally compact division algebra which is central over a $p$-adic field $K$. Denote by $R$ its ring of integers (i.e., the unique maximal compact subring of $D$; see \cite[p.~30]{Weil:67}) and let an element $\pi$ in $D$ be a generator of the maximal ideal of $R$. For $n \geq 1$, let
 \[ \textup{SL}_1^n(D) = \{ g \in SL_1(D) \mid g \equiv 1 \!\!\!\pmod {\pi^n R}\}\]
be the $n$-th congruence subgroup of $\textup{SL}_1(D)$.

It is well known, cf. \cite[Th.~7]{Riehm1970} and \cite[Prop.~4.3]{Ershov2022}, that $\textup{SL}_1^1(D)$ is a pro-$p$ group and  $\textup{SL}_1(D) \simeq \Delta \ltimes \textup{SL}_1^1(D)$, where $\Delta$ is the group of roots of unity in $W$ (the unramified extension of $K$ as in the proof of Proposition~\ref{LieDivisionAlg}) of order coprime to $p$ and reduced norm $1$. In particular, the first congruence subgroup is the maximal pro-$p$ subgroup of $\textup{SL}_1(D)$.

\begin{theorem} \label{t:sl_1^n}
 Let $p$ and $\ell$ be primes. Let $K$ be a finite extension of $\mb{Q}_p$ with ramification index $e$ and let $D$ be a central division algebra of degree $\ell$ over $K$. Then:
 \begin{enumerate}
  \item $\textup{SL}_1(D)$ is CA if and only if $K$ does not contain non-trivial $\ell$-th roots of unity. In this case, $\textup{SL}_1(D)$ is CSA.
  \item $\textup{SL}_1^1(D)$ is CA if and only if either $p=\ell$ and $K$ does not contain non-trivial $\ell$-th roots of unity or $p\neq \ell$.
  \item If $n \geq e\ell/(p-1)$, then $\textup{SL}_1^n(D)$ is CA.
 \end{enumerate}
\end{theorem}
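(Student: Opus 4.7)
The common thread through all three parts is the following structural fact, already extracted in the proof of Proposition~\ref{LieDivisionAlg}: in a central division $K$-algebra $D$ of prime degree $\ell$, the centralizer of any non-central element is the commutative maximal subfield it generates. Combined with Lemma~\ref{l:units'}, this implies that any subgroup $H\le D^\times$ with $H\cap K^\times=\{1\}$ is automatically CA. The plan is therefore to identify, in each case, when the relevant congruence subgroup meets $Z(D)=K^\times$ trivially, and then to strengthen the conclusion to CSA in part (1) by a separate argument.

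Since the reduced norm restricts to the $\ell$-th power on $K^\times$, one has $\textup{SL}_1^n(D)\cap K^\times=\mu_\ell(K)\cap(1+\pi_D^nR)$ for $n\ge 0$, with $\pi_D$ a uniformizer of $D$. For part (1) this is simply $\mu_\ell(K)$, so the CA criterion is precisely the triviality of $\mu_\ell(K)$; otherwise a non-trivial $\ell$-th root of unity is a non-trivial central element in the non-abelian group $\textup{SL}_1(D)$, blocking CA. For part (2) I split into three subcases: if $p\ne\ell$, the group $\mu_\ell(K)$ injects into the residue field and hence misses $1+\pi_DR$; if $p=\ell$ and $\mu_p(K)=\{1\}$, the intersection is trivial for free; and if $p=\ell$ with $\zeta_p\in K$, then $v_D(\zeta_p-1)=e\ell/(p-1)>0$ places $\zeta_p$ in $1+\pi_DR$ and destroys CA. For part (3), every non-trivial $\zeta\in\mu_\ell(K)$ satisfies $v_D(\zeta-1)\le e\ell/(p-1)$, with equality possible only when $p=\ell$ and $\zeta=\zeta_p$, so once $n$ exceeds this threshold the intersection becomes trivial and Lemma~\ref{l:units'} finishes the argument.

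For the CSA strengthening in part (1), I verify malnormality: given $x\ne 1$ and $g\in\textup{SL}_1(D)\setminus C(x)$ with $gC(x)g^{-1}\cap C(x)\ne\{1\}$, a non-trivial $h$ in the intersection is non-central (triviality of $\mu_\ell(K)$) and generates $K(x)$ as a field; the same holds for $ghg^{-1}$, so $g$ normalizes $K(x)$ and induces, by Skolem-Noether, some $\sigma\in\mathrm{Aut}_K(K(x))$. If $\sigma=\mathrm{id}$ then $g\in C_D(K(x))^\times=K(x)^\times\subseteq C(x)$, contradicting $g\notin C(x)$. Otherwise $\sigma$ generates $\mathrm{Gal}(K(x)/K)$, realizing $D$ as the cyclic algebra $(K(x)/K,\sigma,a)$ with $a\notin N_{K(x)/K}(K(x)^\times)$; any element of $D$ inducing $\sigma$ then has reduced norm $(-1)^{\ell-1}a\cdot N_{K(x)/K}(t)$ for some $t\in K(x)^\times$. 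The hypothesis forces $\ell$ odd (since in characteristic $0$ one always has $-1\in K^\times$, so $\ell=2$ would put a non-trivial $\ell$-th root of unity in $K$), and hence the sign disappears, so reduced norm $1$ would exhibit $a$ as a norm, contradicting $D$ being a division algebra.

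The main obstacle will be this last reduced-norm computation: correctly invoking the cyclic crossed-product description of $D$ via Skolem-Noether and exploiting the parity of $\ell$ is the only non-formal step. Everything else reduces to computing $\mu_\ell(K)\cap(1+\pi_D^nR)$ via the standard valuation estimates for roots of unity in $p$-adic fields and invoking Lemma~\ref{l:units'} together with the centralizer description from Proposition~\ref{LieDivisionAlg}.
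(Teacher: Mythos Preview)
Your treatment of (1)--(3) matches the paper's: both pinpoint $\mu_\ell(K)\cap\textup{SL}_1^n(D)$ as the sole obstruction to CA. For (3) the paper simply cites torsion-freeness from \cite[Prop.~4.3(c)]{Ershov2022} rather than estimating $v_D(\zeta-1)$; note that your phrase ``once $n$ exceeds this threshold'' yields only the strict inequality $n>e\ell/(p-1)$, leaving the boundary value $n=e\ell/(p-1)$ uncovered by your computation (and indeed when $p=\ell$ and $\zeta_p\in K$ one has $v_D(\zeta_p-1)=e\ell/(p-1)$ exactly, so $\zeta_p\in 1+\pi_D^nR$). Your CSA argument, however, is genuinely different from the paper's. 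The paper considers $F\cap hFh^{-1}$ for $F=C_D(g)$ and $h\notin C$: if this intersection is $K$ one is done immediately, and if it is $F$ then $h^\ell$ centralizes $g$, so CA forces $h^\ell=1$, whence $K(h)=K(\zeta_\ell)$ has degree over $K$ dividing both $\ell$ and $\ell-1$, a contradiction. You instead deduce from a non-trivial intersection that the conjugating element normalizes $K(x)$, present $D$ as the cyclic algebra $(K(x)/K,\sigma,a)$, and show that a norm-one element inducing $\sigma\ne\mathrm{id}$ would force $a\in N_{K(x)/K}(K(x)^\times)$, contradicting that $D$ is division. Both arguments are correct; the paper's degree trick is shorter and avoids any crossed-product bookkeeping, while yours exposes the Brauer-theoretic reason the argument works and makes transparent why $\ell$ odd is exactly what is needed.
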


\begin{proof}
Since $D$ has prime degree, the only possible elements of $\textup{SL}_1(D)$ with non-abelian centralizer are its central elements, i.e. the $\ell$-th roots of unity in $K$. This proves the first part of (1).

Now, since $\textup{SL}_1^1(D)$ is pro-$p$, the roots of unity that it possibly contains are of order $p^r$, for some $r$. Thus $\textup{SL}_1^1(D)$ contains some central element of $D$ if and only if $p=\ell$ and $K$ contains a $p$-th root of unity. Next, under the hypotheses of item (3), the congruence subgroup $\textup{SL}_1^n(D)$ is torsion-free by \cite[Prop.~4.3(c)]{Ershov2022}, so it does not contain non-trivial roots of unity. By the previous reasoning it has the CA property.

Finally we prove the assertion on CSA. Let $1 \neq g \in G=\Sl_1(D)$, $C=C_G(g)$ and $h \in G \smallsetminus C$. Let $F=C_D(g)$ and $\varphi:D\to D$ be the map $x\mapsto hxh^{-1}$. Then $F$ is a subfield of dimension $\ell$ over $K$, and so is  $\varphi(F)$.  The subextension $F \cap \varphi(F)$ is either $F$ or $K$.

In the first case, it follows that $\varphi(F)$ is equal to $F$ because both fields have dimension $\ell$ over $K$. So $\varphi:F\to F$ is an automorphism of the field extension $F|K$, and consequently the $\ell$-th power $h^\ell$ commutes with $g$. By the CA property, either $h \in C$, which we are assuming is not the case, or $h^\ell = 1$.  It follows that $h$ is an $\ell$-th root of $1$ in an extension of degree $\ell$ of $K$. So $\ell$ divides $\ell-1$, which is impossible.

In the second case, we find that $C \cap \varphi(C)$ is the set of elements of reduced norm 1 in $K$, which are $\ell$-th roots of $1$. So $C$ is malnormal.
\end{proof}

For instance, if $n \geq e\ell$, note that $\textup{SL}_1^n(D)$ is a uniform pro-$p$ CA group, since it is powerful by \cite[Prop.~4.3(b)]{Ershov2022}.
This contributes to \cite[Question~1]{ShZZ:19}.

On the other hand, assuming $K=\mathbb{Q}_p$, then $\textup{SL}_1^1(D)$ is CA, and $\textup{SL}_1(D)$ is CA if and only if $p$ is odd and $\ell$ does not divide $p-1$.

\begin{proof}[Proof of Theorem~\ref{t:CSA}] 
Applying Theorem~\ref{t:sl_1^n} for  $K=\Q_p$, $p$ odd, and $\ell$ prime coprime to $p-1$
yields $\Sl_1(D)/\Sl_1^1(D)\simeq \operatorname{Ker}(N\colon\F_{p^\ell}^\times\to\F_p^\times)$, and thus has order
$\tfrac{p^\ell-1}{p-1}$ which is coprime to $p-1$.
\end{proof}

\section{Some universal $p$-Frattini extensions}
\label{s:uniFrat}

\subsection{Group extensions with abelian kernel}%Thomas 22/5/2023
\label{ss:nonspl}
Let $G$ be a finite group, let $p$ be a prime number, and let $M$ be a finitely generated left $\Z_p[G]$-module. 
A short exact sequence of profinite groups
\begin{equation}
\label{eq:ext}
\bos:\qquad\xymatrix{\triv\ar[r]&M\ar[r]^{\iota}&E\ar[r]^{\pi}&G\ar[r]&\triv}
\end{equation}
is called an {\it extension of $G$} by the left $\Z_p[G]$-module $M$. Note that -- by definition --
$E$ is a virtually abelian, virtual pro-$p$ group
satisfying $\iota(M)\subseteq O_p(E)$.
If $\bft\colon\xymatrix{\triv\ar[r]&M'\ar[r]^{\iota^\prime}&E'\ar[r]^{\pi^\prime}&G\ar[r]&\triv}$
is another such extension of $G$,  a pair of homomorphisms $(\alpha,\beta)$, where $\alpha\colon M\to M'$
is a homomorphism of left $\Z_p[G]$-modules and $\beta\colon E\to E'$ is a group homomorphism is called a {\it morphism} from 
$\bos$ to $\bft$, provided the
diagram
\begin{equation}
\label{eq:exthom}
\xymatrix{\bos:&\triv\ar[r]&M\ar[d]^\alpha\ar[r]^{\iota}&E\ar[d]^\beta\ar[r]^{\pi}&G\ar@{=}[d]\ar[r]&\triv\\
\bft:&\triv\ar[r]&M'\ar[r]^{\iota^\prime}&E'\ar[r]^{\pi^\prime}&G\ar[r]&\triv}
\end{equation}
commutes. For short we denote the isomorphism class of the 
extension $\bos$ by $[\bos]$. It is well-known that the equivalence classes of group extensions of $G$ by the left 
$\Z_p$-module $M$ are in canonical one-to-one correspondence with elements in the group $H^2(G,M)$
(cf. \cite[Th.~1, p.71]{KG:coh}, \cite[Th. IV.3.12]{brown:coh}). In particular, there exists a morphism $(\alpha,\beta)$ from $\bos$
to $\bft$, if $[\bft]=H^2(\alpha)([\bos])$, where $H^2(\alpha)\colon H^2(G,M)\to H^2(G,M')$ is the map induced by $\alpha$.
Sometimes it will be convenient to consider the category $\big(\frac{\Z_p[G]}{}\big)$ of all extensions of the finite group $G$
by some finitely generated left $\Z_p[G]$-modules $M$.
 The extension
\eqref{eq:ext} is said to be a {\it $p$-Frattini extension}, if $\iota(M)\subseteq\Phi(E)$, 
where $\Phi(E)$ denotes the Frattini subgroup of the profinite group $E$ (i.e., %$\Phi(E)$ coincides with 
the intersection of the maximal closed proper subgroups of $E$; see \cite[Sec.~22.1]{FJ:fiar}).
The following fact will turn out to be useful for our purpose.
\begin{fact}
\label{fact:fratspl}
Let 
$\bos:\xymatrix{\triv\ar[r]&M\ar[r]^{\iota}&E\ar[r]^{\pi}&G\ar[r]&\triv}$
be a Frattini extension of profinite groups.
Then $\bos$ is split, i.e., $[\bos]=0$, if, and only if,
 $M=\{1\}$.
\end{fact}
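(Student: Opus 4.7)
The \emph{if} direction is immediate: if $M=\{1\}$, then $\pi\colon E\to G$ is an isomorphism and $\bos$ is trivially split. The real content is the converse, and my plan is to deduce $M=\{1\}$ from the existence of a splitting by invoking the non-generator characterization of the Frattini subgroup of a profinite group, namely, that a closed subgroup $H\leq E$ with $H\cdot\Phi(E)=E$ already satisfies $H=E$ (this is the standard property cited in \cite[Sec.~22.1]{FJ:fiar}).

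Concretely, suppose $[\bos]=0$. Then there is a continuous section $s\colon G\to E$ with $\pi\circ s=\iid_G$, and $s(G)$ is a closed subgroup of $E$ which is a complement to $\iota(M)$. In particular one has the set-theoretic product decomposition $E=\iota(M)\cdot s(G)$. By the Frattini hypothesis $\iota(M)\subseteq\Phi(E)$, so
\[
E \;=\; \iota(M)\cdot s(G) \;\subseteq\; \Phi(E)\cdot s(G),
\]
and therefore $s(G)\cdot\Phi(E)=E$.

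Applying the non-generator property of $\Phi(E)$ gives $s(G)=E$. Consequently $\pi\colon E\to G$ is bijective (its inverse being $s$), so $\iota(M)=\operatorname{Ker}(\pi)=\{1\}$, and since $\iota$ is injective we conclude $M=\{1\}$, as required.

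The only subtle point is ensuring that the non-generator characterization of $\Phi(E)$ applies to the closed (not necessarily open) subgroup $s(G)$; this is precisely the content of the profinite Frattini theory, and no further work is needed once we note that $s$ is continuous and so $s(G)$ is closed in $E$. Thus the entire argument reduces to a one-line application of the Frattini non-generator principle.
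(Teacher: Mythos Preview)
Your proof is correct and is essentially identical to the paper's own argument: both exploit that $E=\iota(M)\cdot s(G)$ together with $\iota(M)\subseteq\Phi(E)$ to conclude $s(G)=E$ via the non-generator property of the Frattini subgroup, and then read off $M=\{1\}$ from injectivity of $\iota$. The only cosmetic difference is that the paper leaves the closedness of the section's image implicit, whereas you spell it out.
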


\begin{proof} %\ignore{\obeylines \footnotesize}
If $M=\{1\}$, then $\bos$ is obviously split, and there is nothing to prove. So assume that $\bos$ is split, and
let $\eta\colon G\to E$ be a continuous group homomorphism satisfying $\pi\circ\eta=\iid_G$. 
The profinite group $E$ is generated by $\iota(M)$ and $\eta(G)$, with $\iota(M)\subseteq\Phi(E)$; 
so $E=\eta(G)$. 
For $m\in M$, if $\iota(m)=\eta(g)$ then $g=1$; thus $M=\{1\}$. 
\qedhere
\end{proof}

For further details the interested reader is referred to \cite[Ch.~5]{KG:coh}. 
It is well-known that the classical Krull-Schmidt theorem (cf. \cite[\S10.2, p.~227]{KG:coh}) holds for complete discrete valuation domains (\cite{dave:modrep}). %\marginpar{to be improved} 
Hence a version of a  W.~Gasch\"utz' theorem (\cite[Th.~10, p.~272]{KG:coh})
implies that the category $\big(\frac{\Z_p[G]}{}\big)$ has a unique minimal projective cover 
\begin{equation}
\label{eq:Uext}
\tbos_G:\qquad\xymatrix{\triv\ar[r]&\Omega_2(G,\Z_p)\ar[r]^-{\iota}&\tG\ar[r]^-{\pi_G}&G\ar[r]&\triv},
\end{equation}
where $\Omega_2(G,\Z_p)$ denotes the second Heller translate of $\Z_p$.
The extension class $[\tbos_G]\in H^2(G,\Omega_2(G,\Z_p))=\Ext^2_{\Z_p[G]}(\Z_p,\Omega_2(G,\Z_p))$
(cf. \cite[Ch.~IV, Th.~4.1]{mcl:hom})
corresponds to the exact sequence (cf. \cite[Sec.~III.1]{mcl:hom})%\marginpar{to be improved} 
\begin{equation}
\label{eq:extcl}
\xymatrix{0\ar[r]&\Omega_2(G,\Z_p)\ar[r]&P_1\ar[r]^{\partial_1}&P_0\ar[r]^\eps&\Z_p\ar[r]&0,}
\end{equation} 
where $(P_\bullet,\partial_\bullet,\eps)$ is a minimal projective resolution of the trivial left $\Z_p[G]$-module $\Z_p$.
It is well-known that $\pi_G\colon\tG\to G$ coincides with the universal $p$-Frattini cover with abelian kernel (cf. \cite[\S22.11]{FJ:fiar}),
i.e., $\iota(\Omega_2(G,\Z_p))\subseteq O_p(\Phi(G,\Z_p))
\cap\Phi(\Phi(G,\Z_p))$.

%\newpage
\subsection{Properties of the universal $p$-Frattini cover with abelian kernel}
\label{ss:propFratt}
Let $G$ be a finite group and let $\pi_G\colon\Phi\to G$ be its universal $p$-Frattini cover with abelian kernel (cf. \ref{eq:Uext}) for some prime number~$p$. Since a finite group is of finite cohomological $p$-dimension if, and only if, its order is not divisible by $p$, Fact~\ref{fact:fratspl} implies the following:
\begin{fact}
\label{fact:triv}
For a finite group $G$, the epimorphism $\pi_G\colon\Phi\to G$ is injective,
%For a finite group $G$, $\pi_G\colon\tG\to G$ is an isomorphism, 
if and only if, $|G|$ is a unit in $\Z_p$.
\end{fact}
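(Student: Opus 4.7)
The plan is to reduce the statement to the vanishing of $\Omega_2(G,\Z_p)$ and then invoke the cited dichotomy on cohomological $p$-dimension.

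First, since $\iota$ is injective, $\Ker(\pi_G)=\iota(\Omega_2(G,\Z_p))$, so $\pi_G$ is injective if and only if $\Omega_2(G,\Z_p)=0$. One may equivalently phrase this via Fact~\ref{fact:fratspl}: the extension $\tbos_G$ is Frattini, so it splits exactly when its kernel vanishes, and splitting here amounts to injectivity of $\pi_G$. Next, from the exact sequence \eqref{eq:extcl} realizing $\Omega_2(G,\Z_p)$ as the second syzygy in a minimal projective resolution of the trivial $\Z_p[G]$-module $\Z_p$, the vanishing $\Omega_2(G,\Z_p)=0$ is equivalent to $\Z_p$ having projective dimension at most one over $\Z_p[G]$, hence to $G$ having cohomological $p$-dimension at most $1$.

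The quoted fact that a finite group has finite cohomological $p$-dimension if and only if $p\nmid |G|$ now closes the argument. If $|G|\in\Z_p^\times$, the idempotent $e=|G|^{-1}\sum_{g\in G}g$ splits the augmentation $\Z_p[G]\twoheadrightarrow\Z_p$, exhibiting $\Z_p$ as a projective $\Z_p[G]$-module; thus $\Omega_2(G,\Z_p)=0$ and $\pi_G$ is an isomorphism. Conversely, if $p\mid |G|$, the cohomological $p$-dimension of $G$ is infinite, as witnessed for instance by restriction to a cyclic subgroup $C\leq G$ of order $p$ (for which $H^n(C,\F_p)\neq 0$ for every $n\geq 1$), so it certainly exceeds $1$ and $\Omega_2(G,\Z_p)\neq 0$.

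I anticipate no real obstacle. The only subtlety worth flagging is that, for finite groups, the cohomological $p$-dimension takes only the values $0$ and $\infty$, so the upper bound "$\leq 1$" furnished by $\Omega_2(G,\Z_p)=0$ automatically collapses to "$=0$", matching the statement $|G|\in\Z_p^\times$.
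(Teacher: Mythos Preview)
Your proof is correct and follows essentially the same approach as the paper. The paper's justification is just the single sentence preceding the statement: it invokes the dichotomy on cohomological $p$-dimension of finite groups together with Fact~\ref{fact:fratspl}, exactly as you do; you have simply supplied the details (the averaging idempotent for one direction, the inequality $\mathrm{cd}_p(C)\le \mathrm{cd}_p(G)$ via restriction of a projective resolution for the other) that the paper leaves implicit.
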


Let $H$ be a subgroup of $G$.  The $H$-module $\rst^G_H(\Omega)$ formed by restriction of the $G$-action  can be decomposed as
\begin{equation}
\label{eq:Urst}
\rst^G_H(\Omega)\simeq \Omega_2(H,\Z_p)\oplus P
\end{equation}
for some finitely generated projective left $\Z_p[H]$-module $P$. Thus:

\begin{prop}
\label{prop:inher}
Let $H$ be a subgroup of a finite group $G$. 
Then $\pi_G^{-1}(H)$ contains an abelian normal subgroup $P$, which is
a projective left $\Z_p[H]$-module, such that $\pi_G^{-1}(H)/P\simeq\Phi(H,\mathbb{Z}_p)$,
and the induced map $\pi_G^{-1}(H)/P\to H$ coincides with the
universal $p$-Frattini cover with abelian kernel of $H$.
\end{prop}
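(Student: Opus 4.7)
The plan is to pull back the universal extension $\tbos_G$ along the inclusion $H \hookrightarrow G$ and then strip off the projective summand provided by~\eqref{eq:Urst}.

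First I would set $E := \pi_G^{-1}(H)$ and restrict $\tbos_G$ to obtain the extension
\[
\bos\colon\quad 1 \to \rst^G_H(\Omega_2(G,\Z_p)) \to E \to H \to 1.
\]
Invoking~\eqref{eq:Urst}, fix a $\Z_p[H]$-equivariant splitting $\rst^G_H(\Omega_2(G,\Z_p)) = \Omega_2(H,\Z_p) \oplus P$, with $P$ finitely generated projective. Viewed inside $E$, the submodule $P$ is abelian and $H$-invariant, hence is a normal subgroup of $E$. Taking the quotient produces the candidate
\[
\bar{\bos}\colon\quad 1 \to \Omega_2(H,\Z_p) \to E/P \to H \to 1,
\]
together with the induced epimorphism $E/P \to H$.

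The heart of the argument is to identify $\bar{\bos}$ with $\tbos_H$, which I would do cohomologically. By~\eqref{eq:extcl}, $[\tbos_G]$ is represented by the minimal projective $\Z_p[G]$-resolution $(P_\bullet,\partial_\bullet,\eps)$ of $\Z_p$. Because $\Z_p[G]$ is free as a $\Z_p[H]$-module, the restricted complex $\rst^G_H(P_\bullet) \to \Z_p$ is a projective $\Z_p[H]$-resolution whose $1$-syzygy is $\Omega_2(H,\Z_p) \oplus P$. By the Krull--Schmidt theorem for $\Z_p[H]$-modules (available because $\Z_p$ is a complete discrete valuation ring), splitting off the projective summand $P$ yields a minimal projective $\Z_p[H]$-resolution of $\Z_p$; by~\eqref{eq:extcl} applied to $H$, the associated extension class is $[\tbos_H]$. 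On the group-extension side, this resolution-level truncation corresponds to quotienting $E$ by $P$, giving $\bar{\bos}$. Hence $[\bar{\bos}] = [\tbos_H]$ in $H^2(H,\Omega_2(H,\Z_p))$, and the Eilenberg--MacLane correspondence between equivalence classes of extensions and elements of $H^2$ then gives $\bar{\bos} \simeq \tbos_H$ as extensions of $H$, as required.

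The main obstacle will be making the correspondence between the group-level quotient and the resolution-level truncation precise. The enabling fact is that projective $\Z_p[H]$-modules are $H$-cohomologically trivial: a projective is a direct summand of a free $\Z_p[H]$-module, and $H^n(H,\Z_p[H]) = 0$ for $n \geq 1$ by Shapiro's lemma applied to the trivial subgroup. Consequently, the natural splitting $H^2(H,\Omega_2(H,\Z_p) \oplus P) \simeq H^2(H,\Omega_2(H,\Z_p)) \oplus H^2(H,P)$ collapses to the first summand, and projecting $[\bos]$ onto it gives $[\bar{\bos}]$. Combined with Krull--Schmidt uniqueness of minimal projective resolutions, this identifies $[\bar{\bos}]$ with $[\tbos_H]$, completing the proof.
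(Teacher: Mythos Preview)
Your proposal is correct and follows exactly the route the paper has in mind: the paper's entire proof is the word ``Thus:'' after displaying~\eqref{eq:Urst}, and you have supplied the cohomological details (pushing out along the projection $\rst^G_H(\Omega)\to\Omega_2(H,\Z_p)$, using $H^2(H,P)=0$, and matching Yoneda classes via the minimal resolution) that make that implication precise. One minor slip: what you call the ``$1$-syzygy'' is the second Heller translate $\Omega_2(H,\Z_p)$ in the paper's indexing, but this does not affect the argument.
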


The following property is an immediate consequence of  \eqref{eq:extcl}.

\begin{prop}
\label{prop:rk1}
Let $G$ be a finite $p$-group. Then
\begin{equation}
\label{eq:rk1}
\rk_{\Z_p}(\Omega)=(d_G-1)\cdot |G|+1, 
%\rk_{\Z_p}(\Omega_2(G,\Z_p))=(d_G-1)\cdot |G|+1, 
\end{equation}
where $\rk_{\Z_p}(M)$ is cardinality of a minimal generating subset of a finitely generated free ${\Z_p}$-module $M$ and $d_G=\dim_{\F_p}(G/\Phi(G))$ is the cardinality of a minimal generating subset of $G$.
%minimal number of generators of $G$.
\end{prop}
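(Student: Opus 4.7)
The plan is to read the rank off the four-term exact sequence
\[\xymatrix{0\ar[r]&\Omega\ar[r]&P_1\ar[r]^{\partial_1}&P_0\ar[r]^\eps&\Z_p\ar[r]&0}\]
coming from a minimal projective $\Z_p[G]$-resolution of the trivial module, as recorded in \eqref{eq:extcl}. Since $G$ is a finite $p$-group, $\Z_p[G]$ is a (noncommutative) local ring with residue field $\F_p$, hence every finitely generated projective $\Z_p[G]$-module is free; so $P_0$ and $P_1$ are free of finite rank, and by minimality these ranks equal the minimal numbers of $\Z_p[G]$-generators of $\Z_p$ and of $\Ker(\eps)=I_G$, respectively.

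First I would handle $P_0$. By Nakayama applied to the local ring $\Z_p[G]$, the minimal number of $\Z_p[G]$-generators of $\Z_p$ equals $\dim_{\F_p}(\Z_p\otimes_{\Z_p[G]}\F_p)=1$; thus $P_0\simeq\Z_p[G]$ and $\rk_{\Z_p}(P_0)=|G|$.

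Next I would compute the rank of $P_1$. Again by Nakayama, the minimal number of $\Z_p[G]$-generators of $I_G$ equals $\dim_{\F_p}(I_G/(I_G^2+pI_G))$. Reducing modulo $p$ identifies this with $\dim_{\F_p}(\bar I_G/\bar I_G^{\,2})$ inside the group algebra $\F_p[G]$, and a standard computation yields $\bar I_G/\bar I_G^{\,2}\simeq G^{\mathrm{ab}}\otimes_{\Z}\F_p=G/[G,G]G^p=G/\Phi(G)$, whose $\F_p$-dimension is $d_G$. Consequently $P_1\simeq\Z_p[G]^{d_G}$ and $\rk_{\Z_p}(P_1)=d_G\cdot|G|$.

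Finally, $\Omega$ sits inside the $\Z_p$-free module $P_1$, so it is itself a finitely generated free $\Z_p$-module; hence ranks over $\Z_p$ are additive along the above exact sequence, giving
\[\rk_{\Z_p}(\Omega)-\rk_{\Z_p}(P_1)+\rk_{\Z_p}(P_0)-\rk_{\Z_p}(\Z_p)=0,\]
i.e. $\rk_{\Z_p}(\Omega)=d_G|G|-|G|+1=(d_G-1)|G|+1$. The only nontrivial point is the identification of the $\Z_p[G]$-rank of $P_1$ with $d_G$; everything else is formal, so I expect no real obstacle.
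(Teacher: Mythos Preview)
Your argument is correct and is precisely the computation the paper has in mind: the paper merely states that \eqref{eq:rk1} ``is an immediate consequence of \eqref{eq:extcl}'', and you have supplied the standard details (locality of $\Z_p[G]$ for a $p$-group, the identifications $P_0\simeq\Z_p[G]$ and $P_1\simeq\Z_p[G]^{d_G}$ via Nakayama and $\bar I_G/\bar I_G^{\,2}\simeq G/\Phi(G)$, and additivity of $\Z_p$-ranks).
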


\subsection{The universal $p$-Frattini extension with elementary abelian kernel}
\label{ss:propFrattel}
Next to
%Apart from 
the universal $p$-Frattini 
cover
%extension 
$\pi_G\colon\Phi\to G$ with abelian kernel,
there exists also a
\emph{universal $p$-Frattini extension $\pi_{G,\F_p}\colon\eG\to G$ with elementary abelian kernel},
where $\operatorname{Ker}(\pi_{G,\F_p})\simeq \Omega_2(G,\F_p)$ and $\Omega_2(G,\F_p)$ fits into an exact sequence
\begin{equation}
\label{eq:extclmodp}
\xymatrix{0\ar[r]&\Omega_2(G,\F_p)\ar[r]&Q_1\ar[r]^{\partial_1}&Q_0\ar[r]^\eta&\F_p\ar[r]&0,}
\end{equation} 
which 
is a minimal projective resolution of the trivial left $\F_p[G]$-module $\F_p$.

\subsection{Central involutions}
\label{ss:ceninv}
From now on we assume that $p=2$ and that the finite group $G$ has a central involution $\sigma$, 
i.e., $\sigma^2=1\not=\sigma$ and $\sigma\in\Zen(G)$. 
For short we call a finitely generated left $\Z_2[G]$-module $M$ a
{\it $\Z_2[G]$-lattice}, if 
its additive group is torsion-free.
Every left $\Z_2[G]$-lattice $M$ admits the
$\Z_2[G]$-submodules
\begin{align}
M_+&=\{\,m\in M\mid \sigma\cdot m=m\,\}=M^{\langle\sigma\rangle},\label{eq:quat1}\\
\intertext{and}
M_-&=\{\,m\in M\mid\sigma\cdot m=-m\,\}\label{quat2}.
\end{align}
One easily verifies that $M_+$ and $M_-$ are saturated $\Z_2[G]$-submodules, i.e.,
 for $\sat_M(M_+)=\{\,m\in M\mid\exists r\in\Z_2\smallsetminus\{0\}: r\cdot m\in M_+\,\}$ one has
 $\sat_M(M_+)=M_+$ and $\sat_M(M_-)=M_-$. In particular, $M^-=M/M_+$ and $M^+=M/M_-$ are again
 $\Z_2[G]$-lattices.
 
\subsection{The $(-1)$-universal $2$-Frattini extension with abelian kernel}
\label{ss:propFrattone}
Let $G$ be a finite group with a central involution $\sigma$. Put
\begin{equation}
\label{eq:-uni1}
\Upsilon=\hGt=\Phi(G,\mathbb{Z}_2)/\iota(\Omega_2(G,\Z_2)_+),
\end{equation}
and let
\begin{equation}
\label{eq:-uni2}
\hat{\bos}_G:\qquad\xymatrix{\triv\ar[r]&\Omega_2(G,\Z_2)^-\ar[r]^-{j}&\hGt\ar[r]^-{\tau_G}&G\ar[r]&\triv}
\end{equation}
be the extension of $G$ induced by \eqref{eq:-uni1}. In particular, one has $\Omega^-=\Omega_2(G,\Z_2)^-=\Omega_2(G,\Z_2)/\Omega_2(G,\Z_2)_+$. If no confusion arises we will from now on assume that 
$j\colon\Omega^-\to\Upsilon$
%$j\colon\Omega_2(G,\Z_2)\to\hGt$
is given by inclusion.
The projectivity of \eqref{eq:Uext} 
provides our next result.

\begin{prop}
\label{prop:-uni}
Let $G$ be a finite group and let $\sigma$ be a central involution of $G$.
The extension $\hbos_G$ of \textup{(\ref{eq:-uni2})} has the following properties:
\begin{itemize}
\item[(a)] It is universal for extensions of $G$
by  $\Z_2[G]$-lattices $M$ on which $\sigma$ acts by $-1$, i.e., if 
\begin{equation}
\label{eq:-uni3}
\xymatrix{\bft\colon&\triv\ar[r]&M\ar[r]^\iota&X\ar[r]^\rho&G\ar[r]&\triv}
\end{equation}
is an extension of $G$ by the $\Z_2[G]$-lattice $M$ satisfying $M=M_-$,
then there exists a homomorphism of extensions $(\alpha,\beta)\colon\hbos_G\to\bft$.
\item[(b)] $\hat{\bos}_G$ is a $2$-Frattini extension, i.e., 
\begin{equation}
\image(j)\subseteq\Phi(\Upsilon)\cap O_2(\Upsilon).
\end{equation}
\end{itemize}
\end{prop}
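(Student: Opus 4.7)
The plan is to deduce both parts from the universality and the Frattini property of the \emph{full} extension $\tbos_G$ of \eqref{eq:Uext}, together with the $\sigma$-eigenspace decomposition from Subsection~\ref{ss:ceninv}.

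For part~(a), I would apply the universal property of $\tbos_G$ in the category $\big(\frac{\Z_2[G]}{}\big)$ to the extension $\bft$ of \eqref{eq:-uni3}: this yields a morphism $(\alpha_0,\beta_0)\colon\tbos_G\to\bft$, where $\alpha_0\colon\Omega\to M$ is a $\Z_2[G]$-homomorphism and $\beta_0\colon\Phi\to X$ is a continuous group homomorphism lifting $\iid_G$. The crucial observation is that for each $m\in\Omega_2(G,\Z_2)_+$ one has
\[
\sigma\cdot\alpha_0(m)=\alpha_0(\sigma\cdot m)=\alpha_0(m),
\]
while the assumption $M=M_-$ forces $\sigma\cdot\alpha_0(m)=-\alpha_0(m)$; together these give $2\alpha_0(m)=0$, and torsion-freeness of the $\Z_2[G]$-lattice $M$ then yields $\alpha_0(m)=0$. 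Hence $\alpha_0$ factors through $\Omega^-=\Omega/\Omega_2(G,\Z_2)_+$, producing the required $\Z_2[G]$-homomorphism $\alpha\colon\Omega^-\to M$. From $\beta_0\circ\iota=\iota'\circ\alpha_0$ one deduces that $\beta_0$ is trivial on $\iota(\Omega_2(G,\Z_2)_+)$, so $\beta_0$ descends along the quotient $\Phi\twoheadrightarrow\Upsilon$ to a continuous homomorphism $\beta\colon\Upsilon\to X$, and the commutativity of the squares in \eqref{eq:exthom} is inherited from that of $(\alpha_0,\beta_0)$.

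For part~(b), I would invoke the fact recorded at the end of \S\ref{ss:nonspl} that $\iota(\Omega)\subseteq O_2(\Phi)\cap\Phi(\Phi)$. Let $q\colon\Phi\to\Upsilon$ denote the quotient map, whose kernel $\iota(\Omega_2(G,\Z_2)_+)$ is a closed normal subgroup of $\Phi$ contained in $\iota(\Omega)\subseteq O_2(\Phi)$. Then $q(O_2(\Phi))$ is a closed normal pro-$2$ subgroup of $\Upsilon$, hence contained in $O_2(\Upsilon)$. Likewise, the preimage under $q$ of any maximal open proper subgroup of $\Upsilon$ is a maximal open proper subgroup of $\Phi$, so $q(\Phi(\Phi))\subseteq\Phi(\Upsilon)$. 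Applying $q$ to $\iota(\Omega)\subseteq O_2(\Phi)\cap\Phi(\Phi)$ and noting $\image(j)=q(\iota(\Omega))$ yields the desired containment.

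I do not anticipate a serious obstacle: the argument is essentially a manipulation of the universal property of $\tbos_G$ and of the well-behaved ``subgroup operators'' $O_2(\argu)$ and $\Phi(\argu)$ under quotients. The only delicate technical point is the invocation of torsion-freeness of $M$ in part~(a), which is precisely what allows the passage from $2\alpha_0(m)=0$ to $\alpha_0(m)=0$; without the lattice hypothesis on $M$ the factorization of $\alpha_0$ through $\Omega^-$ would fail.
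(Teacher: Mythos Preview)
Your proposal is correct and follows essentially the same approach as the paper: both parts are deduced from the projectivity and the Frattini property of $\tbos_G$ together with the fact that any $\Z_2[G]$-homomorphism into a lattice with $M=M_-$ must kill $\Omega_+$. Your write-up is simply more explicit about why $\alpha_0(\Omega_+)=0$ (the eigenvalue computation plus torsion-freeness) and about why $\Phi(\argu)$ and $O_2(\argu)$ are preserved under the quotient map, points which the paper leaves implicit.
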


\begin{proof}
{\rm (a)} As \eqref{eq:Uext} is projective, there exists a homomorphism of extensions
$(\alpha_\circ,\beta_\circ)\colon\tbos\to\bft$. Moreover, as $M=M_-$ one has that
$\alpha_\circ(\Omega_2(G,\Z_2)_+)=0$ and $\beta_\circ(\iota(\Omega_2(G,\Z_2)_+)=0$.
Thus $(\alpha_\circ,\beta_\circ)$ induces a homomorphism of extensions
$(\alpha,\beta)\colon \hbos\to\bft$ as claimed.

\noindent
{\rm (b)} As \eqref{eq:Uext} is a $2$-Frattini extension, one has
%$\iota(\Omega)\subseteq \Phi(\Phi)$.
$\iota(\Omega_2(G,\Z_2))\subseteq\Phi(\Phi(G,\Z_2))$. 
Hence, $j(\Omega^-)\subseteq\Phi(\Upsilon)\cap O_2(\Upsilon)$. %(cf. ...).
\end{proof}

The following proposition describes some features of the $2$-Frattini extension $\hbos_G$ for a finite group $G$. 

\begin{prop}
\label{prop:2grp}
Let $G$ be a finite group with a central involution $\sigma$. %, and let $\sigma\in\Zen(G)$ be a central involution. 
Then the following holds.
\begin{itemize}
\item[(a)] If $G$ is a finite $2$-group, then 
$\rk_{\Z_2}(\Omega^-)=(d_G-1)\cdot\frac{|G|}{2}$.
%$\rk_{\Z_2}(\Omega_2(G,\Z_2)^-)=(d_G-1)\cdot\frac{|G|}{2}$.
\item[(b)] The epimorphism $\tau_G\colon\Upsilon\to G$ is an isomorphism if, and only if,  $P\in\Syl_2(G)$ is cyclic and $G\simeq P\cdot O_{2^\prime}(G)$. Here
$\Syl_2(G)$ denotes the set of all $2$-Sylow subgroups of $G$.
\item[(c)] 
Let $H$ be a subgroup of $G$. Then $j(\Omega^-)\subseteq \tau_G^{-1}(H)$.
Moreover, if $H$ contains $\sigma$ then there exists a $\Z_2[H]$-submodule $N$
of $\rst^G_H(\Omega^-)$ such that
\begin{equation}
\tau_G^{-1}(H)/j(N)\simeq\Upsilon(H,\sigma,2). 
\end{equation}
Furthermore, there exists
a subgroup $Y$ of $\tau_G^{-1}(H)$ isomorphic to $\Upsilon(H,\sigma,2)$ such that $\tau_G^{-1}(H)= Y\cdot j(N)$.
\end{itemize}
\end{prop}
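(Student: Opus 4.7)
For part (a), the plan is to rank-count on the minimal projective $\Z_2[G]$-resolution $0\to\Omega\to P_1\to P_0\to\Z_2\to 0$ of the trivial module, which for a finite $2$-group $G$ has $P_0\simeq\Z_2[G]$ and $P_1\simeq\Z_2[G]^{d_G}$. Since $2$ is invertible in $\Q_2$, the functor $M\mapsto M^{\sigma=-1}$ is exact on $\Q_2[\sigma]$-modules; applying it after base change to $\Q_2$ kills the trivial rightmost term and yields an exact sequence of $\Q_2$-spaces whose outer terms have dimensions $d_G\cdot|G|/2$ and $|G|/2$ (because $\sigma$ acts freely on $G$ by left multiplication, contributing one dimension per $\langle\sigma\rangle$-orbit to the $(-1)$-eigenspace of $\Q_2[G]$). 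The Euler characteristic then gives $\dim_{\Q_2}(\Omega_{\Q_2})^{\sigma=-1}=(d_G-1)|G|/2$, and since $\Omega^-$ is a $\Z_2$-lattice (as $\Omega_+$ is saturated in $\Omega$) with the same $\Q_2$-dimension as $\Omega_-$, the stated formula follows.

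For part (b), the strategy is to observe that $\tau_G$ is an isomorphism iff $\Omega^-=0$. In the forward direction, I would pick $P\in\Syl_2(G)$ containing $\sigma$ and use Proposition~\ref{prop:inher} to obtain a $\Z_2[P]$-decomposition $\rst^G_P(\Omega)\simeq\Omega_2(P,\Z_2)\oplus Q$ with $Q$ projective. Since $M\mapsto M^-$ respects direct sums, $\Omega^-=0$ forces $\Omega_2(P,\Z_2)^-=0$, and part (a) then forces $d_P=1$, so $P$ is cyclic. Since $|G|$ is even, $2$ is the smallest prime divisor of $|G|$, so Burnside's normal $p$-complement theorem delivers $G=P\ltimes O_{2'}(G)$. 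For the converse, writing $G=P\ltimes N$ with $N=O_{2'}(G)$ and $P$ cyclic, the invertibility of $|N|$ in $\Z_2$ makes $\Z_2[G/N]=\Z_2[P]$ projective over $\Z_2[G]$ (it is induced from the trivial, hence projective, $\Z_2[N]$-module), and the minimal projective resolution of $\Z_2$ over $\Z_2[G]$ is inflated from the standard periodic one $\cdots\to\Z_2[P]\xrightarrow{N_P}\Z_2[P]\xrightarrow{x-1}\Z_2[P]\to\Z_2\to 0$. Its second syzygy is $\Z_2\cdot N_P\simeq\Z_2$ with trivial $G$-action, giving $\Omega_+=\Omega$ and $\Omega^-=0$.

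For part (c), the containment $j(\Omega^-)\subseteq\tau_G^{-1}(H)$ is immediate since $j(\Omega^-)=\ker(\tau_G)$. Assuming $\sigma\in H$, Proposition~\ref{prop:inher} gives $\rst^G_H(\Omega)\simeq\Omega_2(H,\Z_2)\oplus P$ with $P$ projective over $\Z_2[H]$. The key step is to identify $\pi_G^{-1}(H)$, as an extension of $H$ by $\Omega$, with the fibre product $\Phi(H,\Z_2)\times_H(H\ltimes P)$: since projectives are cohomologically trivial, $H^2(H,P)=0$, so the extension class of $\pi_G^{-1}(H)\to H$ in $H^2(H,\Omega)=H^2(H,\Omega_2(H,\Z_2))\oplus H^2(H,P)$ sits entirely in the first summand and equals the class of $\Phi(H,\Z_2)\to H$. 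The map $a\mapsto(a,(\pi_H(a),0))$, with $\pi_H\colon\Phi(H,\Z_2)\to H$ the structural projection, then splits $\pi_G^{-1}(H)\to\Phi(H,\Z_2)$. Passing to $\tau_G^{-1}(H)=\pi_G^{-1}(H)/\iota(\Omega_+)$ and using the compatible splitting $\Omega_+=\Omega_2(H,\Z_2)_+\oplus P_+$ yields $\tau_G^{-1}(H)\simeq\Upsilon(H,\sigma,2)\times_H(H\ltimes P^-)$, where $P^-=P/P_+$. Taking $N$ to be the image of $P$ in $\rst^G_H(\Omega^-)$ (so that $j(N)\simeq P^-$ is the kernel of the projection onto $\Upsilon(H,\sigma,2)$) and $Y$ to be the image of the section above produces $\tau_G^{-1}(H)/j(N)\simeq\Upsilon(H,\sigma,2)$ and $\tau_G^{-1}(H)=Y\cdot j(N)$, as desired.

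The main obstacle I foresee is the fibre-product identification in (c): lifting the module-theoretic splitting $\Omega\simeq\Omega_2(H,\Z_2)\oplus P$ to a genuine group-theoretic splitting of $\pi_G^{-1}(H)\to\Phi(H,\Z_2)$ is what requires the vanishing $H^2(H,P)=0$, and one must then carefully track the action of $\sigma$ through the subsequent fibre-product and quotient operations. By contrast, (a) and (b) should be comparatively routine once the standard periodic resolution over a cyclic $2$-group and Burnside's normal $p$-complement theorem are in hand.
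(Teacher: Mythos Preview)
Your proposal is correct, and each part reaches the goal, but the routes differ from the paper's in interesting ways.

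For (a) the paper computes $\rk_{\Z_2}(\Omega^-)$ by restricting $\Omega$ to $Z=\langle\sigma\rangle$ and reading off the decomposition $\rst^G_Z(\Omega)\simeq\Z_2\oplus m\cdot\Z_2[Z]$ from Proposition~\ref{prop:rk1}; your Euler-characteristic count over $\Q_2$ is the same arithmetic in different clothing.

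For (b) your forward implication is genuinely more elementary: once $d_P=1$ you invoke the standard corollary of Burnside's transfer theorem (cyclic Sylow for the smallest prime forces a normal complement), whereas the paper works harder, first showing $\Omega_2(G,\Z_2)\simeq\Z_2$ is the trivial module and then appealing to Tate duality to extract the splitting $G=P\cdot O_{2'}(G)$. Your converse via the inflated periodic resolution is essentially what the paper means by ``$G$ has periodic mod~$2$ cohomology''; one small point to tighten is the minimality of the inflated resolution over $\Z_2[G]$, which follows because every simple $\F_2[G]$-module $S$ with $S^N\neq 0$ is the trivial module (as $N\trianglelefteq G$ has odd order), and on that module both $x-1$ and the norm element act by zero.

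For (c) the paper avoids your fibre-product construction entirely: it simply applies the universality of $\hat{\bos}_H$ (Proposition~\ref{prop:-uni}(a)) to the extension $\tau_G^{-1}(H)\to H$, obtaining a morphism $(\alpha,\beta)\colon\Upsilon(H,\sigma,2)\to\tau_G^{-1}(H)$; the decomposition $\rst^G_H(\Omega^-)\simeq\Omega_2(H,\Z_2)^-\oplus Q^-$ shows $\alpha$ is injective, hence so is $\beta$, and one sets $Y=\image(\beta)$, $N=Q^-$. Your approach via $H^2(H,P)=0$ and an explicit fibre-product splitting is correct and more hands-on, but the paper's use of universality is shorter and sidesteps the bookkeeping you flag as the ``main obstacle''.
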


\begin{proof} 
Put $Z=\langle\sigma\rangle$.

 \noindent
 {\rm (a)}
As $\rk_{\Z_2}(\Omega_2(G,\Z_2))=(d_G-1)|G|+1$ and $\Omega_2(Z,\Z_2)\simeq\Z_2$, one concludes from \eqref{eq:Urst} and \eqref{eq:rk1} that
\begin{equation}
\label{eq:rkmin1}
\rst^G_Z(\Omega)\simeq \Omega_2(Z,\Z_2)\oplus m\cdot\Z_2[Z]
\end{equation}
for $m=(d_G-1)\cdot\frac{|G|}{2}$. This yields (a).

\noindent
{\rm (b)} Note that $\tau_G$ is an isomorphism if, and only if $\Omega_2(G,\Z_2)^-=0$.
Let $P\in\Syl_2(G)$. Then $Z\subseteq P$ and, as in (a), one has 
\begin{equation}
\label{eq:res2Syl}
\rst^G_P(\Omega^-)\simeq\Omega_2(P,\Z_2)^-\oplus n\cdot\Z_2[P/Z]
\end{equation}
for some integer $n\geqslant 0$. Thus $\Omega_2(G,\Z_2)^-=0$ implies
$n=0$ and $\Omega_2(P,\Z_2)^-=0$. From $n=0$ one concludes
that $\rst^G_P(\Omega_2(G,\Z_2))\simeq\Omega_2(P,\Z_2)$. Moreover, $\Omega_2(P,\Z_2)^-=0$ implies $d_P=1$ by item (a),
i.e., $P$ is cyclic.
As $P$ is cyclic, one has that $\Omega_2(P,\Z_2)\simeq\Z_2$ is the trivial left $\Z_2[P]$-module isomorphic to $\Z_2$
as an abelian group.
Let $\psi\colon G\to\Z_2^\times$ denote the representation associated to the left $\Z_2[G]$-module $\Omega_2(G,\Z_2)$.
Since $\Z_2^\times\simeq\Z_2\times\{\pm1\}$, and as $G$ is a finite group, one has that $\image(\psi)\subseteq \{\pm1\}$. 
Suppose $-1\in\image(\psi)$. Then there exists $g\in G$ such that $\psi(g)=-1$. Let $g_+$ and $g_-$ be the elements in
$\langle g\rangle$ satisfying $g=g_+\cdot g_-$, where $g_+$ is of $2$-power order and $g_-$ is of odd order.
Then, by construction, $\psi(g_-)=1$, and $\psi(g_+)=1$ as $g_+$ is contained in some Sylow $2$-subgroup of $G$, and
$\rst^G_P(\Omega_2(G,\Z_2))\simeq\Omega_2(P,\Z_2)$ is a trivial left $\Z_2[P]$-module, a contradiction,
showing that $\Omega_2(G,\Z_2)\simeq\Z_2$ is the trivial left $\Z_2[G]$-module. In particular,
$G$ has periodic mod $2$ cohomology of period $2$.

As $(G:P)$ is odd, the restriction map $H^2(G,\Z_2)\to H^2(P,\Z_2)$ is injective (because $\text{cor} \circ \text{res} = (G:P)$). Thus, by Tate duality
(cf. \cite[Prop.~VI.7.1]{brown:coh}), the map
$P\simeq H_1(P,\Z_2)\to H_1(G,\Z_2)=G/[G,G]O^2(G)\simeq \Z_2/|G|\cdot\Z_2$ is surjective and thus an isomorphism showing that $P[G,G]O^2(G)=G$ and $P\cap [G,G]O^2(G)=\{1\}$.
Hence $[G,G]O^2(G)$ is of odd order and thus coincides with $O_{2^\prime}(G)$. Otherwise, if $G=P\cdot O_{2^\prime}(G)$
for $P$ a cyclic $2$-group, then $G$ has periodic mod $2$
cohomology, i.e., $\Omega_2(G,\Z_2)\simeq\Z_2$ and consequently $\Omega_2(G,\Z_2)^-=0$, i.e., $\tau_G\colon\Upsilon\to G$ is an isomorphism.

\noindent
{\rm (c)} By \eqref{eq:Urst}, one has
\begin{equation}
\label{eq:rkmin2}
\rst^G_H(\Omega)\simeq \Omega_2(H,\Z_2)\oplus Q
\end{equation}
for some finitely generated projective left $\Z_2[H]$-module $Q$, and hence
\begin{equation}
\label{eq:rkmin}
\rst^G_H(\Omega^-)\simeq \Omega_2(H,\Z_2)^-\oplus Q^-.
\end{equation}
Hence $N=j(Q^-)\subseteq j(\Omega^-)$
has the desired property.
Proposition~\ref{prop:-uni}(a) shows that one has a homomorphism of extensions
\begin{equation}
\label{eq:Xuni1}
\xymatrix{\triv\ar[r]&\Omega_2(H,\Z_2)^-\ar[r]^-{j_H}\ar[d]_\alpha&\Upsilon(H,\sigma,2)\ar[r]^-{\tau_H}\ar[d]_\beta&
H\ar@{=}[d]\ar[r]&\triv\\
\triv\ar[r]&\Omega_2^-\ar[r]^-j & \tau_G^{-1}(H)\ar[r]^{\tau_G}&H\ar[r]&\triv}
\end{equation}
and, by the previous remark $\alpha$ is injective. Hence $\beta$ is injective, and putting $Y=\image(\beta)$ one concludes that $\tau_G^{-1}(H)=Y\cdot j(N)$.
\end{proof}
From Proposition~\ref{prop:2grp} one concludes the following property.

\begin{lemma}
\label{lem:finsub}
Let $G$ be a finite group with a unique involution $\sigma$ and 
let $x$ be an element in $\Upsilon$ such that $\tau_G(x)$ has even order.
Then the centralizer $C_{\Upsilon}(x)$ is a finite group and
\begin{equation}
\label{eq:Cg1}
C_{\Upsilon}(x)\simeq P\ltimes O ,
\end{equation}
where $P\in\Syl_2(C_{\Upsilon}(x))$ is cyclic and $O\in\Odd(G)$.
\end{lemma}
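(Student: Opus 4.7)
The plan is to show first that $\tau_G$ is injective on the centralizer $C_\Upsilon(x)$, which simultaneously gives finiteness and an identification $C_\Upsilon(x)\cong H\leq G$ with $\sigma\in H$, and then to exploit the universality of $\hat{\bos}_H$ to force $H$ into the advertised shape.

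For the first step, the kernel of $\tau_G\big|_{C_\Upsilon(x)}$ equals $C_\Upsilon(x)\cap j(\Omega^-)$, which is precisely the set of $\tau_G(x)$-fixed points of $\Omega^-$, because $\Omega^-$ is abelian and conjugation by $x$ on $j(\Omega^-)$ is induced by the $\Z_2[G]$-module action of $\tau_G(x)$. As $\sigma$ is the unique involution of $G$ and $\tau_G(x)$ has even order $2n$, the element $\tau_G(x)^n$ is an involution and therefore equals $\sigma$; hence the $\tau_G(x)$-fixed points lie inside $(\Omega^-)^\sigma$. But $\Omega^-$ is a torsion-free $\Z_2[G]$-lattice on which $\sigma$ acts as $-1$, so $(\Omega^-)^\sigma=\{0\}$. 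Consequently $\tau_G$ restricts to an injection on $C_\Upsilon(x)$, whence $C_\Upsilon(x)$ is finite and $H:=\tau_G(C_\Upsilon(x))\leq G$ is a subgroup isomorphic to $C_\Upsilon(x)$ containing $\sigma$.

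For the second step, apply Proposition~\ref{prop:2grp}(c) to $H$ to obtain a $\Z_2[H]$-submodule $N\subseteq \rst^G_H(\Omega^-)$ such that the quotient $\tau_G^{-1}(H)/j(N)$ is isomorphic to $\Upsilon(H,\sigma,2)$ over $H$, i.e.\ with induced map to $H$ agreeing with $\tau_H$. Composing the inclusion $C_\Upsilon(x)\hookrightarrow \tau_G^{-1}(H)$ with this quotient projection produces a homomorphism $s\colon H\cong C_\Upsilon(x)\to \Upsilon(H,\sigma,2)$ satisfying $\tau_H\circ s=\iid_H$, so $\hat{\bos}_H$ splits. Since $\hat{\bos}_H$ is a $2$-Frattini extension by Proposition~\ref{prop:-uni}(b), Fact~\ref{fact:fratspl} forces $\Omega_2(H,\Z_2)^-=\{0\}$. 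Proposition~\ref{prop:2grp}(b) then yields $H=P\cdot O_{2'}(H)$ with cyclic $P\in\Syl_2(H)$; as $|P|$ and $|O_{2'}(H)|$ are coprime and $O_{2'}(H)$ is normal, the product is semidirect, and transporting along $C_\Upsilon(x)\cong H$ provides the decomposition $C_\Upsilon(x)\simeq P\ltimes O$ with $O=O_{2'}(H)\in\Odd(G)$.

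The point I expect to require the most care is the compatibility implicit in Proposition~\ref{prop:2grp}(c): that the isomorphism $\tau_G^{-1}(H)/j(N)\simeq \Upsilon(H,\sigma,2)$ really sits over $\iid_H$, so that $s$ is a genuine section of $\tau_H$. This compatibility is built into the construction of the quotient via the universal property of $\hat{\bos}_H$ tested against the restricted extension $\tau_G^{-1}(H)\to H$. Once it is in hand, the rest is a formal application of the Frattini-kernel dichotomy encoded in Fact~\ref{fact:fratspl} together with the structural criterion of Proposition~\ref{prop:2grp}(b).
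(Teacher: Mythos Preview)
Your proposal is correct and follows essentially the same route as the paper's proof: both first show $C_\Upsilon(x)\cap\Omega^-=\{0\}$ via the chain $C_{\Omega^-}(\tau_G(x))\subseteq C_{\Omega^-}(\sigma)=\{0\}$, then use Proposition~\ref{prop:2grp}(c) to pass to $\Upsilon(H,\sigma,2)$, obtain a section of $\tau_H$ from the embedded copy of $C_\Upsilon(x)$, and invoke Fact~\ref{fact:fratspl} together with Proposition~\ref{prop:2grp}(b). Your explicit flagging of the compatibility of the isomorphism in Proposition~\ref{prop:2grp}(c) over $\iid_H$ is a fair point of care, and the paper's proof relies on it just as you do.
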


\begin{proof}
Let $C=C_{\Upsilon}(x)$ and $\bar{x}=\tau_G(x)$. There exists $d$ in $\N$ such that $\bar{x}^d=\sigma$, whence
\begin{equation}
\label{eq:intOm}
C\cap \Omega^-= C_{\Omega^-}(x)=C_{\Omega^{-}}(\bar{x})\subseteq C_{\Omega^-}(\bar{x}^d)
= C_{\Omega^-}(\sigma) =\{1\}.
\end{equation}
So $C$ is isomorphic to a finite subgroup $H$ of $G$ containing $\bar{x}$ and thus $\sigma$. Hence with the notation of Proposition~\ref{prop:2grp}(c) the canonical map $\rho\colon C\to \Upsilon/j(N)$, $\rho(c)=cj(N)$, yields a section for $\tau_H$, i.e.,  the $2$-Frattini
extension 
\begin{equation}
\label{eq:X}
\hbos_X\colon\ \xymatrix{\triv\ar[r]&\Omega_2(H,\mathbb{Z}_2)^-\ar[r]^-{j}&\Upsilon(H,\sigma,2)/j(N)\ar[r]^-{\tau_H}&
H\ar[r]&\triv}
\end{equation}
splits. Therefore, by Fact~\ref{fact:fratspl}, one has that $\tau_H$ is an isomorphism, and hence Proposition~\ref{prop:2grp}(b) yields the claim.
\end{proof}

\subsection{Ranks}
\label{ss:rk}
Let $(G,\sigma)$ be a finite group with a central involution $\sigma$.  In general there is no formula for the dimension of $\Omega_2(G,\F_2)$, but it has to be calculated from the $2$-modular representation theory of $G$. Nevertheless, the rank of $\Omega=\Omega_2(G,\Z_2)$ and $\Omega^-=\Omega_2(G,\Z_2)^-$ are related
in a straightforward way which will be described next.

\begin{prop}
\label{prop:rk2}
Let $G$ be a finite group with a central involution $\sigma$. Put $Z=\langle\sigma\rangle$ and
$\bG=G/Z$. Then
\begin{itemize}
\item[(a)] $\rk_{\Z_2}(\Omega)=\dim_{\F_2}(\Omega_2(G,\F_2))$.
\item[(b)] $\rk_{\Z_2}(\Omega^-)=\tfrac{1}{2}(\rk_{\Z_2}(\Omega)-1)$.
\item[(c)] If $Z\subseteq\Phi(G)$, then 
\begin{equation}
\label{eq:M2}
\dim_{\F_2}(\Omega_2(\bG,\F_2))-1=\tfrac{1}{2}(\dim_{\F_2}(\Omega_2(G,\F_2))-1).
\end{equation}
\item[(d)] If $Z\subseteq\Phi(G)$, then there exists an $\F_2[\bG]$-submodule $\Omega_2(\bG,\F_2)^\circ$ of $\Omega_2(\bG,\F_2)$
of codimension $1$ satisfying $\Phi(\bG,\F_2)/\Omega_2(\bG,\F_2)^\circ\simeq G$ and an
isomorphism 
\begin{equation}
\label{eq:eqcrit0}
\gamma\colon\Omega_2(\bG,\F_2)^\circ\longrightarrow \Omega^-/2\Omega^-.
\end{equation}
\end{itemize}
\end{prop}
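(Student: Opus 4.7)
The plan is to prove (a) and (b) directly from the definitions, construct the submodule in (d) via universality of $\pi_{\bG,\F_2}$, construct the isomorphism $\gamma$ via the structure of $\Upsilon/2\Omega^-$, and derive (c) from (d) by a dimension count.

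For (a), I would take a minimal projective $\Z_2[G]$-resolution $\cdots\to P_1\to P_0\to\Z_2\to 0$. Since each $P_i$ is $\Z_2$-flat, reducing modulo $2$ preserves exactness of $0\to\Omega\to P_1\to P_0\to\Z_2\to 0$, and minimality is preserved because $\Z_2$ is local with residue field $\F_2$; hence $\Omega_2(G,\F_2)\simeq\Omega/2\Omega$ and $\rk_{\Z_2}\Omega=\dim_{\F_2}\Omega_2(G,\F_2)$. For (b), I would apply \eqref{eq:Urst} to $H=Z=\langle\sigma\rangle$: since $\Z_2[Z]$ admits no non-trivial idempotents, every finitely generated projective $\Z_2[Z]$-module is free, so $\rst^G_Z(\Omega)\simeq\Omega_2(Z,\Z_2)\oplus m\cdot\Z_2[Z]$ for some $m\geq 0$, with $\Omega_2(Z,\Z_2)\simeq\Z_2$ the trivial module (as $Z$ has periodic cohomology of period $2$). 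Passing to $(-)$-quotients gives $\Omega_2(Z,\Z_2)^-=0$ and $\Z_2[Z]^-\simeq\Z_2$, hence $\rk_{\Z_2}\Omega^-=m=(\rk_{\Z_2}\Omega-1)/2$.

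For the submodule in (d), the hypothesis $Z\subseteq\Phi(G)$ makes $1\to Z\to G\to\bG\to 1$ a $2$-Frattini extension of $\bG$ with elementary abelian kernel $Z\simeq\F_2$. By universality of $\pi_{\bG,\F_2}$ (\S\ref{ss:propFrattel}), I would obtain a surjection $\Phi(\bG,\F_2)\twoheadrightarrow G$ lifting the identity on $\bG$, whose restriction to $\Omega_2(\bG,\F_2)$ is a surjection onto $Z$; its kernel is the required codimension-one submodule $\Omega_2(\bG,\F_2)^\circ$ with $\Phi(\bG,\F_2)/\Omega_2(\bG,\F_2)^\circ\simeq G$. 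To produce $\gamma$, I would analyse $\tau_G^{-1}(Z)\subseteq\Upsilon$: any lift $\tilde\sigma$ of $\sigma$ satisfies $\tilde\sigma^2\in\Omega^-$, and since $\sigma$ acts on $\Omega^-$ as $-1$ while $\tilde\sigma^2$ is $\sigma$-fixed, one forces $\tilde\sigma^2=-\tilde\sigma^2$, so $\tilde\sigma^2=0$ because $\Omega^-$ is torsion-free. It follows that $[\tau_G^{-1}(Z),\tau_G^{-1}(Z)]=(\sigma-1)\Omega^-=2\Omega^-$, so $\tau_G^{-1}(Z)/2\Omega^-\simeq(\Omega^-/2\Omega^-)\oplus\F_2$ is elementary abelian and $\Upsilon/2\Omega^-\to\bG$ is a $2$-Frattini extension of $\bG$ with elementary abelian kernel. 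Universality then yields a surjection $\Phi(\bG,\F_2)\twoheadrightarrow\Upsilon/2\Omega^-$ over $\bG$, compatible with the map to $G$, and chasing kernels produces the surjection $\gamma\colon\Omega_2(\bG,\F_2)^\circ\twoheadrightarrow\Omega^-/2\Omega^-$.

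The main obstacle is the injectivity of $\gamma$. My preferred route is to identify $\Omega_+\simeq\Omega_2(\bG,\Z_2)$ as $\Z_2[\bG]$-modules by taking $Z$-fixed points: since every finitely generated projective $\Z_2[G]$-module restricts to a free $\Z_2[Z]$-module, the functor $(-)^Z$ applied to the minimal $\Z_2[G]$-resolution of $\Z_2$ produces an exact sequence of projective $\Z_2[\bG]$-modules whose second syzygy is $\Omega_+=\Omega^{\langle\sigma\rangle}$; the hypothesis $Z\subseteq\Phi(G)$ should guarantee that this resolution is itself minimal at the first two terms (this is the technical heart, resting on the fact that $Z\subseteq\Phi(G)$ forces $\sigma$ to act trivially on the head of the projective cover of the trivial module). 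Reducing mod $2$ then gives $\Omega_+/2\Omega_+\simeq\Omega_2(\bG,\F_2)$, and comparing with the short exact sequence $0\to\Omega_+/2\Omega_+\to\Omega/2\Omega\to\Omega^-/2\Omega^-\to 0$ obtained from $0\to\Omega_+\to\Omega\to\Omega^-\to 0$ pins down $\gamma$ as an isomorphism. Given (d), part (c) is immediate from (a) and (b): $\dim_{\F_2}\Omega_2(\bG,\F_2)-1=\dim_{\F_2}\Omega^-/2\Omega^-=\rk_{\Z_2}\Omega^-=(\rk_{\Z_2}\Omega-1)/2=(\dim_{\F_2}\Omega_2(G,\F_2)-1)/2$.
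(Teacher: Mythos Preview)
Your arguments for (a), (b), and the construction of the surjection $\gamma$ in (d) are essentially the same as the paper's. The substantive divergence is in how you close the gap for injectivity of~$\gamma$ (equivalently, how you prove~(c)).

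The paper proves (c) \emph{first}, independently of (d), by invoking the Willems--Stammbach theorem: since $Z\subseteq\Phi(G)$, inflation $H^1(\bG,S)\to H^1(G,S)$ is an isomorphism for every simple $\F_2[\bG]$-module $S$, which forces $\F_2[\bG]\otimes_G Q_i$ (for $Q_0,Q_1$ the first two terms of the minimal $\F_2[G]$-resolution of $\F_2$) to be the first two terms of the minimal $\F_2[\bG]$-resolution of $\F_2$. The dimension formula \eqref{eq:M2} then drops out of $\dim_{\F_2}(\F_2[\bG]\otimes_G P)=\tfrac12\dim_{\F_2}P$. With (c) in hand, injectivity of $\gamma$ in (d) is a pure dimension count using (b).

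Your route for injectivity has a genuine gap. You propose to identify $\Omega_+\simeq\Omega_2(\bG,\Z_2)$ by applying $(-)^Z$ to the minimal $\Z_2[G]$-resolution, but $(-)^Z$ is only left exact and fails to produce an exact sequence here. Concretely, from $0\to\Omega_1\to P_0\to\Z_2\to 0$ one gets $\mathrm{coker}(P_0^Z\to\Z_2)\simeq H^1(Z,\Omega_1)$, and since $\mathrm{res}^G_Z(\Omega_1)\simeq\Omega_1(Z,\Z_2)\oplus(\text{free})$ with $\Omega_1(Z,\Z_2)\simeq\Z_2^-$, this $H^1$ is $\F_2$, not zero; the same obstruction appears at the next step because $\mathrm{res}^G_Z(\Omega)$ contains a trivial $\Z_2$-summand. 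So $0\to\Omega_+\to P_1^Z\to P_0^Z\to\Z_2\to 0$ is \emph{not} exact, and your ``technical heart'' does not reduce to a question of minimality. The hypothesis $Z\subseteq\Phi(G)$ is exactly what is needed, but it enters through the $H^1$-inflation isomorphism (Willems--Stammbach) controlling the minimal $\F_2[\bG]$-resolution, not through exactness of $(-)^Z$. Once you accept that input, the cleanest fix is the paper's order: prove (c) via Willems--Stammbach, then conclude injectivity of $\gamma$ from the equality of dimensions.
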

\begin{proof}
(a) If $(P_\bullet,\partial_\bullet,\eps)$ is a minimal projective resolution of the trivial left $\Z_2[G]$-module $\Z_2$,
 $(\F_2\otimes_{\Z_2} P_\bullet,\partial_\bullet^\otimes,\eps^\otimes)$ is a minimal projective resolution of the trivial left 
 $\F_2[G]$-module $\F_2$. As all $\Z_2[G]$-modules $P_k$ are $\Z_2[G]$-lattices, this yields the claim.
 
 \noindent
 (b) Let $d=\tfrac{1}{2}(\rk_{\Z_2}(\Omega) -1)$. Then, as $\Omega_2(Z,\Z_2)\simeq\Z_2$ is the
 trivial left $\Z_2[Z]$-module, one has $\rst^G_Z(\Omega)=\Z_2\oplus\Z_2[Z]^d$, which 
 yields the claim.
 
 \noindent
 (c) If $\tau_S\colon Q_S\to S$ is the minimal projective cover of the irreducible left $\F_2[\bG]$-module $S$ in the category of left $\F_2[G]$-modules,
 $\tau^\otimes_S\colon  \F_2[\bG]\otimes_G Q_S\to S$ is the minimal projective cover in the category of left
 $\F_2[\bG]$-modules. 
 As $Z\subseteq \Phi(G)$, $\inf_{\bG}^G\colon H^1(\bG,S)\to H^1(G,S)$
 are isomorphisms for all irreducible left $\F_2[\bG]$-modules $S$ (cf. \cite[Theorem~3.9]{wwil:splt},
 \cite[Theorem]{stamm:splt}). 
 Thus - using the notation of \eqref{eq:extclmodp}
 (cf \S\ref{ss:propFrattel}) - one has a short exact sequence
 \begin{equation}
 \label{eq:crit1}
 \xymatrix@C=0.5cm{
 0\ar[r]&\Omega_2(\bG,\F_2)\ar[r]&\F_2[\bG]\otimes_G Q_1\ar[r]^{\partial_1^\otimes}&
 \F_2[\bG]\otimes_G Q_0\ar[r]&\F_2\ar[r]&0
 }.
 \end{equation}
Since 
 \begin{equation}
 \label{eq:eqcrit3}
 \dim_{\F_2}(\F_2[\bG]\otimes_G P)=\tfrac{1}{2}\cdot\dim_{\F_2}(P)
\end{equation}
for any finitely generated projective left $\F_2[G]$-module $P$, \eqref{eq:extcl} yields the claim.

\noindent
(d) As $\Xi=Z.\Omega^-/2\Omega^-\subseteq\Upsilon/2\Omega^-$
is a normal subgroup of exponent $2$, it is an elementary abelian normal $2$-group contained in 
$\Phi(\Upsilon/2\Omega^-)\cap O_2(\Upsilon/2\Omega^-)$.
Thus, by universality, there exist continuous homomorphisms 
$\alpha\colon\Omega_2(\bG,\F_2)\to\Xi$ and
$\beta\colon\Phi(\bG,\F_2)\to\Upsilon/2\Omega^-$
making the diagram
\begin{equation}
\label{eq:dia1}
\xymatrix{\triv\ar[r]&\Omega_2(\bG,\F_2)\ar[r]^-{\iota_{/2}}\ar[d]_\alpha&\Phi(\bG,\F_2)\ar[r]^-{\pi_{\bG,/2}}\ar[d]_\beta&
\bG\ar@{=}[d]\ar[r]&\triv\\
\triv\ar[r]&\Xi\ar[r]^-{\eta} &\Upsilon/2\Omega^- \ar[r]^-{\rho_G}&\bG\ar[r]&\triv}
\end{equation}
commute. As $\Xi\subseteq\Phi(\Upsilon/2\Omega^-)$, $\beta$ is surjective, and thus, by the snake lemma, $\alpha$ is surjective as well.
Let $\tau^\circ_G\colon\Upsilon/2\Omega^-\to G$ denote the canonical projection induced by $\tau_G$. Then one has a commutative diagram
\begin{equation}
\label{eq:dia2}
\xymatrix{\triv\ar[r]&\operatorname{Ker}(\tau_G^\circ\circ\beta)\ar[r]^-{\iota_{/2}^\prime}\ar[d]_\gamma&\Phi(\bG,\F_2)\ar[r]^-{\tau_G^\circ\circ\beta}\ar[d]_\beta&
G\ar@{=}[d]\ar[r]&\triv\\
\triv\ar[r]&\Xi^\circ\ar[r]^-{\eta} &\Upsilon(G,\sigma,2)/2\Omega_2(G,\Z_2)^- \ar[r]^-{\tau_G^\circ}&G\ar[r]&\triv},
\end{equation}
where $\Xi^\circ=\Omega_2(G,\Z_2)^-/2\Omega_2(G,\Z_2)^-$.
Thus $\Omega_2(\bG,\Z_2)^\circ=\operatorname{Ker}(\tau_G^\circ\circ\beta)$ is a submodule of $\Omega_2(\bG,\F_2)$ of codimension $1$ with the desired property. By the snake lemma, $\gamma$ is surjective. Since
$\dim_{\F_2}(\Omega_2(\bG,\Z_2)^\circ)=\dim_{\F_2}(\Xi^\circ)$ by (b) and (c), $\gamma$ must be an isomorphism.
\end{proof}

%%%%%%%%%%%

\subsection{$\Sl_2$-like finite groups with the $\Omega^\circ$-fixed point property}\label{ss:FixedPointProperty}
Under the notation of Proposition~\ref{prop:rk2}(d) we say that a finite group
$G$ with central involution $\sigma$ has the \emph{$\Omega_2(\bG,\F_2)^\circ$-fixed point property}, where $\bG=G/\langle\sigma\rangle$, if $(\Omega_2(\bG,\F_2)^\circ)^H=\{0\}$ for all $H\in\Odd(G)$.

\begin{fact}
\label{fact:Ofix}
Let $(G,\sigma)$ be a finite group with central involution $\sigma$ in $G$ with the
$\Omega_2(\bG,\F_2)^\circ$-fixed point property. Then for every non-trivial odd order
subgroup $H$ of $G$, one has $(\Omega^-)^H=\{0\}$.
\end{fact}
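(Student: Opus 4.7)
The plan is to reduce the claim to a statement modulo $2$ via Nakayama's lemma and then to translate this through the isomorphism $\gamma$ of Proposition~\ref{prop:rk2}(d). Since $\Omega^-$ is a finitely generated $\Z_2$-module, so is the submodule $(\Omega^-)^H$; as $2\Z_2$ is the Jacobson radical of $\Z_2$, it suffices to prove that $(\Omega^-)^H/2(\Omega^-)^H = \{0\}$.

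To pass between fixed points and reduction modulo~$2$, I exploit that $|H|$ is odd, hence a unit in $\Z_2$. The averaging element $e_H = \tfrac{1}{|H|}\sum_{h\in H} h \in \Z_2[H]$ is then an idempotent with $e_H \cdot \Omega^- = (\Omega^-)^H$, producing the $\Z_2[H]$-decomposition $\Omega^- = e_H\Omega^- \oplus (1-e_H)\Omega^-$. Reducing this decomposition modulo $2$ yields the identification
\[
(\Omega^-/2\Omega^-)^H = (\Omega^-)^H/2(\Omega^-)^H,
\]
so it is enough to show $(\Omega^-/2\Omega^-)^H = \{0\}$.

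Since $H$ has odd order and $\sigma$ has order $2$, one has $H\cap\langle\sigma\rangle = \{1\}$, so $H$ injects canonically into $\bG = G/\langle\sigma\rangle$ and acts on both $\Omega_2(\bG,\F_2)^\circ$ and $\Omega^-/2\Omega^-$. The isomorphism $\gamma$ of Proposition~\ref{prop:rk2}(d) is defined by the snake lemma applied to the diagram \eqref{eq:dia2}, whose rows are extensions of $G$ by $\F_2[\bG]$-modules and whose vertical maps come from the universal property of $\pi_{\bG,\F_2}$; every map in sight is $G$-equivariant and $\langle\sigma\rangle$ acts trivially on each elementary abelian $2$-group involved, so $\gamma$ is $\F_2[\bG]$-linear, in particular $H$-equivariant. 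Therefore
\[
(\Omega^-/2\Omega^-)^H \;\cong\; \bigl(\Omega_2(\bG,\F_2)^\circ\bigr)^H \;=\; \{0\}
\]
by the hypothesized $\Omega_2(\bG,\F_2)^\circ$-fixed point property, and Nakayama's lemma completes the argument.

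The only genuinely delicate point is checking the $\bG$-equivariance of $\gamma$; this is not stated explicitly in Proposition~\ref{prop:rk2}(d), but is visible from the construction, as all constituents of the diagrams \eqref{eq:dia1}--\eqref{eq:dia2} are $\bG$-modules and all maps involved are obtained through universal properties and the snake lemma, hence are $\bG$-equivariant.
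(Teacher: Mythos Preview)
Your proof is correct and follows essentially the same strategy as the paper: reduce modulo~$2$ and invoke the isomorphism $\gamma$ of Proposition~\ref{prop:rk2}(d). The only difference is that where the paper argues via purity of $(\Omega^-)^H$ in the lattice $\Omega^-$ (which comes from torsion-freeness alone) to see that a nonzero $(\Omega^-)^H$ survives mod~$2$, you instead use the averaging idempotent $e_H$ together with Nakayama to the same effect; your explicit discussion of the $\bG$-equivariance of $\gamma$ is a point the paper leaves implicit.
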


\begin{proof}%[Proof of Fact~\ref{fact:Ofix}]
By \eqref{eq:eqcrit0} one has an isomorphism
\begin{equation}
\label{eq:eqcrit1}
\Omega^-/2\Omega^-\simeq\Omega_2(\bG,\F_2)^\circ.
\end{equation}
Suppose that there exists a non-trivial subgroup $H\in\Odd(G)$ satisfying $(\Omega^-)^H\not=0$.
Then, as $(\Omega^-)^H$ is a pure subgroup of $\Omega^-$, one has 
\begin{equation}
\label{eq:pure1}
\dim_{\F_2}(((\Omega^-)^H+2\Omega^-)/2\Omega^-)=\rk_{\Z_2}((\Omega^-)^H).
\end{equation}
and thus, by \eqref{eq:eqcrit1}, $(\Omega_2(\bG,\F_2)^\circ)^H\not=0$, a contradiction. Hence, for all
$H\in\Odd(G)$, $H\not=\{1\}$ one has $(\Omega^-)^H=0$.
\end{proof}

\subsection{$\Sl_2$-like finite groups}
\label{s:Sl2}
Thanks to Lemma~\ref{lem:finsub}, we are now able to prove Theorem~\ref{thm:exA}.

\begin{proof}[Proof of Theorem \ref{thm:exA}]
Let $G$ be a finite $\Sl_2$-like group and let $x$ in $\Upsilon$ be such that $\tau_G(x)$ has even order.
By Lemma~\ref{lem:finsub}, $C_{\Upsilon}(x)$ is a finite group isomorphic to $P\ltimes H$, where $P$ is cyclic and $H=O_{2^\prime}(C_\Upsilon(x))$.
\end{proof}

%%%%%%
\subsection{The generalized quarternion groups}
\label{ss:genquat}
For $n\geq 3$ let
\begin{equation}
Q_{2^n}=\langle\,x,y\mid x^{2^{n-1}}=y^4=1,\, yxy^{-1}=x^{-1},\, x^{2^{n-2}}=y^2\,\rangle
\end{equation}
denote the generalized quarternion group of order $2^n$. 
The following property is well-known (cf. \cite[p.~191]{Gorenstein}).

\begin{fact}
\label{fact:genquat}
The element $\sigma=y^2=x^{2^{n-2}}$  is the unique element in $Q_{2^n}$ of order $2$.
In particular, $\Zen(Q_{2^n})=\langle\sigma\rangle$.
\end{fact}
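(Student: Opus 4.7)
The plan is to exploit the normal form of elements of $Q_{2^n}$ coming from the presentation. Every element is uniquely expressible as $x^i$ or $x^i y$ for $0 \le i < 2^{n-1}$; granted that $|Q_{2^n}| = 2^n$ (immediate from the projection onto $\langle y\rangle/\langle y^2\rangle \simeq C_2$ with kernel $\langle x\rangle$), this normal form exhausts the group.

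For uniqueness of the involution I would argue in two cases. In $\langle x\rangle \simeq C_{2^{n-1}}$ the only element of order $2$ is $x^{2^{n-2}} = \sigma$. For any element of the form $x^i y$, the defining relation $y x y^{-1}=x^{-1}$ gives
\[ (x^i y)^2 = x^i (y x^i y^{-1}) y^2 = x^i \cdot x^{-i} \cdot y^2 = \sigma \neq 1, \]
so such an element has order $4$. This eliminates every candidate involution outside $\langle \sigma \rangle$.

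The centre statement then follows formally. Membership $\sigma \in \Zen(Q_{2^n})$ is clear from the two equal expressions $\sigma = y^2 = x^{2^{n-2}}$, which show that $\sigma$ commutes with both generators. Conversely, a central element written as $x^i$ must be fixed under conjugation by $y$, forcing $x^{-i} = x^i$ and hence $i \in \{0, 2^{n-2}\}$; a central element of the form $x^i y$ would have to commute with $x$, but a direct computation gives $(x^i y)\, x\, (x^i y)^{-1} = x^{-1}$, contradicting $n \geq 3$. Hence $\Zen(Q_{2^n}) = \langle \sigma\rangle$.

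There is no substantial obstacle here --- everything reduces to the single identity $y x^i y^{-1} = x^{-i}$ applied to the chosen normal form. The only subtlety worth verifying carefully is that the normal form is genuinely unique, so that we are indexing all $2^n$ elements and not overcounting; this is standard for such a dihedral-type presentation.
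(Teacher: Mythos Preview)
Your argument is correct. The paper does not actually supply a proof of this fact; it simply declares the property ``well-known'' and refers to Gorenstein's \emph{Finite Groups}, p.~191. So there is nothing to compare against, and your direct verification via the normal form $\{x^i, x^i y : 0 \le i < 2^{n-1}\}$ together with the identity $y x^i y^{-1} = x^{-i}$ is a perfectly good self-contained proof.
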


From Theorem~\ref{thm:exA} one deduces the following result.

\begin{cor}
\label{cor:GQct}
For $n\geq 3$ and $Q_{2^n}$ the generalized quaternion group, the pro-$2$ group $\Upsilon({Q}_{2^n},\sigma,2)$ is CA and virtually abelian.
\end{cor}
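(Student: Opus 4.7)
The plan is to combine Theorem~\ref{thm:exA} with the structure of $Q_{2^n}$ (in particular Fact~\ref{fact:genquat} and Example~\ref{ex:Sl2}(a)). By Example~\ref{ex:Sl2}(a) the generalized quaternion group $Q_{2^n}$ is $\Sl_2$-like, so Theorem~\ref{thm:exA} applies. I would split the verification of the CA property according to the parity of $\tau_G(x)$ for a non-trivial $x\in\Upsilon$, and then deduce virtual abelianness from the fact that $\Omega^-$ is an open abelian normal subgroup of $\Upsilon$.

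First, if $\tau_G(x)$ has even order, Theorem~\ref{thm:exA} gives $C_\Upsilon(x)\simeq P\ltimes O_{2'}(C_\Upsilon(x))$ for some finite cyclic $2$-group $P$. Because $G=Q_{2^n}$ is a $2$-group and (from the proof of Lemma~\ref{lem:finsub}) $C_\Upsilon(x)$ injects into $G$, it is itself a finite $2$-group; hence $O_{2'}(C_\Upsilon(x))=\triv$ and $C_\Upsilon(x)\simeq P$ is cyclic, in particular abelian.

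Second, if $\tau_G(x)$ has odd order, then since $Q_{2^n}$ is a $2$-group this forces $\tau_G(x)=1$, i.e.\ $x\in\Omega^-$. For non-trivial $x\in\Omega^-$, I would identify $C_\Upsilon(x)=\tau_G^{-1}(\stab_G(x))$, where $G$ acts on $\Omega^-$ by conjugation, i.e.\ via the module action. Since $\sigma$ acts as $-1$ on the torsion-free $\Z_2$-lattice $\Omega^-$, we have $\sigma\cdot x=-x\neq x$, so $\sigma\notin \stab_G(x)$. By Fact~\ref{fact:genquat}, $\sigma$ is the unique involution of $Q_{2^n}$, so every non-trivial subgroup of $Q_{2^n}$ contains $\sigma$; therefore $\stab_G(x)=\triv$, and $C_\Upsilon(x)=\Omega^-$, which is abelian. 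Together with the previous paragraph, this shows $\Upsilon(Q_{2^n},\sigma,2)$ is CA.

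For virtual abelianness, $\Omega^-$ is a closed normal subgroup of $\Upsilon$ with finite quotient $\Upsilon/\Omega^-\simeq G=Q_{2^n}$, hence open, and it is abelian as a $\Z_2$-lattice. There is no real obstacle: the only point that requires mild care is the odd-order case, where one has to translate the CA question on $\Upsilon$ into a fixed-point question on the lattice $\Omega^-$ and invoke the uniqueness of the involution in $Q_{2^n}$.
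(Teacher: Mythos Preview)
Your proof is correct and follows the same strategy as the paper: verify that $(Q_{2^n},\sigma)$ is $\Sl_2$-like and invoke Theorem~\ref{thm:exA}. The paper's proof is extremely terse---it simply notes that $\Odd(Q_{2^n})=\{\{1\}\}$ and that $\sigma$ is the unique involution, then asserts that Theorem~\ref{thm:exA} yields the claim---whereas you explicitly work out the two cases ($\tau_G(x)$ of even order versus $x\in\Omega^-$) and supply the stabilizer argument for the latter, which is exactly the content of Case~3 in the later Corollary~\ref{cor:New}; so your write-up is more complete but not materially different.
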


\begin{proof}
As $\Odd(Q_{2^n})=\{\{1\}\}$ and since $\sigma$ is the only involution in $Q_{2^n}$, $(Q_{2^n},\sigma)$ is $\Sl_2$-like.
Therefore, Theorem~\ref{thm:exA} yields the claim.
\end{proof}

As $\Omega_2(Q_{2^n},\Z_2)^-$ is the maximal (abelian open normal) subgroup of $\Upsilon(Q_{2^n},\sigma,2)$, Corollary~\ref{cor:GQct} answers \cite[Question~4]{ShZZ:19} affirmatively. 

\subsection{2-local $\Sl_2$-like groups}
\label{ss:2loc}
Another consequence of Theorem~\ref{thm:exA} is the following.

\begin{cor}
    \label{cor:New}
    Let $G$ be a $2$-local $\Sl_2$-like group
    with the $\Omega_2(\bG,\F_2)^\circ$-fixed point property.
    Then
    $\Upsilon(G,\sigma,2)$ is CA.
\end{cor}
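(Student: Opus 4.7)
The plan is to fix an arbitrary non-trivial $x\in\Upsilon=\Upsilon(G,\sigma,2)$ and verify that $C_\Upsilon(x)$ is abelian, splitting into three cases according to $\bar x:=\tau_G(x)\in G$.

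First, if $\bar x=1$, then $x$ is a non-zero element of the torsion-free $\Z_2[G]$-lattice $\Omega^-$, and $\Omega^-\subseteq C_\Upsilon(x)$ trivially. For any $y\in C_\Upsilon(x)$, the conjugation action of $y$ on $\Omega^-$ factors through $\bar y\in G$ and must fix $x$. When $\bar y$ has odd order $>1$, Fact~\ref{fact:Ofix} applied to $\langle\bar y\rangle\in\Odd(G)$ immediately forces $x=0$; when $\bar y$ has even order, the uniqueness of the involution $\sigma\in G$ puts $\sigma\in\langle\bar y\rangle$, so some power of $y$ acts on $\Omega^-$ as $-1$, yielding $2x=0$ and hence $x=0$ by torsion-freeness. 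Thus $C_\Upsilon(x)=\Omega^-$, which is abelian.

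Second, if $\bar x$ has odd order $>1$, then $(\Omega^-)^{\langle\bar x\rangle}=0$ by Fact~\ref{fact:Ofix}, so $C_\Upsilon(x)\cap\Omega^-=0$ and $\tau_G$ restricts to an injection $C_\Upsilon(x)\hookrightarrow C_G(\bar x)$ whose image is a finite subgroup $H$. If $|H|$ is odd, condition (i) of $\Sl_2$-likeness makes $H$ abelian. Otherwise $\sigma\in H$, and I lift $\sigma$ to $t\in C_\Upsilon(x)$; since $\sigma\in\Zen(G)\cap H\subseteq\Zen(H)$ and $\tau_G$ restricts to an isomorphism $C_\Upsilon(x)\to H$, the element $t$ is central in $C_\Upsilon(x)$, so $C_\Upsilon(x)\subseteq C_\Upsilon(t)$ and the problem reduces to the third case applied to $t$.

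Third, if $\bar x$ has even order, Lemma~\ref{lem:finsub} identifies $C_\Upsilon(x)$ with a finite subgroup $H\subseteq G$ containing $\sigma$, with cyclic Sylow $2$-subgroup $P$ and semidirect decomposition $H=P\ltimes O_{2^\prime}(H)$. Condition (i) of $\Sl_2$-likeness makes $O_{2^\prime}(H)$ abelian, so it remains to show that the cyclic $2$-group $P$ (which contains the central involution $\sigma$) acts trivially on $O_{2^\prime}(H)$. This rigidity is what I expect the ``$2$-local'' hypothesis to deliver.

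The main obstacle is precisely that last rigidity statement: one must extract from the $2$-local hypothesis on $G$ that every subgroup of $C_G(\sigma)$ of the form $P\ltimes O$, with $P$ a cyclic $2$-group containing $\sigma$ and $O$ of odd order, is in fact a direct product. The discussion after Corollary~\ref{cor:exB} shows this is the exact failure mode for $G=2.A_5$, whose centralizers $C_4\ltimes C_3$ and $C_4\ltimes C_5$ in $\Upsilon$ obstruct the CA property; the $2$-local hypothesis must therefore be tuned to exclude such local configurations. Once that is secured, the three cases together cover all non-trivial $x\in\Upsilon$ and establish the CA property.
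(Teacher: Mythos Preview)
Your three-case split is exactly the paper's, and your arguments for $\bar x=1$ and for $\bar x$ of odd order are correct; in the latter, your reduction of the even-$|H|$ subcase to the even-order case via the central lift $t$ of $\sigma$ is a clean alternative to the paper's direct argument (which forces $\Omega_2(H,\Z_2)^-=0$ as in the proof of Lemma~\ref{lem:finsub} and then invokes Proposition~\ref{prop:2grp}(b)).

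The only gap is that you stop short of resolving what you yourself call ``the main obstacle'' in the even-order case. The missing step is a one-liner once you unwind the definition: $2$-local means $|\Syl_2(G)|=1$, so the unique Sylow $2$-subgroup $P_2$ is normal in $G$. Identifying $C_\Upsilon(x)$ with the subgroup $H\le G$ via $\tau_G$ (possible since $C_\Upsilon(x)\cap\Omega^-=\{1\}$), the Sylow $2$-subgroup of $H$ is $P=P_2\cap H$, hence normal in $H$. As $O_{2'}(H)$ is also normal and meets $P$ trivially, $H=P\times O_{2'}(H)$, with both factors abelian (the first cyclic, the second by condition~(i)). This is precisely the paper's argument in its even-order case, and it is the sole place the $2$-local hypothesis enters.
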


Here we called a finite group $G$ to be $2$-local, if
$|\Syl_2(G)|=1$, i.e., $P_2\in\Syl_2(G)$ is normal in $G$.

\begin{proof}
Let $x\in\Upsilon=\Upsilon(G,2,\sigma)$ and $C=C_\Upsilon(x)$.

\noindent
{\bf Case 1:} $\ord(\tau_G(x))$ is even.
In this case one concludes from
Theorem~\ref{thm:exA} that 
$C=P\ltimes O_{2^\prime}(C)$,
where $P\in\Syl_2(C)$ is cyclic.
As $G$ is $2$-local, $P_2\in\Syl_2(G)$ is normal in $G$ and
$P=P_2\cap C$ is normal in $C$.
As $P\cap O_{2^\prime}(C)=\{1\}$, one concludes that
$C\simeq P\times O_{2^\prime}(C)$.
In particular, as $G$ is $\Sl_2$-like, $C$ is abelian.

\noindent
{\bf Case 2:} $\ord(\tau_G(x))$ is odd different to $1$:
As $\langle\tau_G(x)\rangle$ is of odd order and non-trivial, 
Fact~\ref{fact:Ofix} implies %that for $C=C_\Upsilon(x)$
\begin{equation}
\label{eq:TA1}
C\cap\Omega^-=(\Omega^-)^{\langle \tau_G(x) \rangle}=\triv.
%C_{\Upsilon(G,\sigma,2)}(g)\cap\Omega_2(G,\Z_2)^-=(\Omega_2(G,\Z_2)^-)^{\langle\bg\rangle}=\triv.
\end{equation}
In particular, $C$ is a finite subgroup of $\Upsilon$.
If its order is odd, it is abelian by condition (i) of the definition of being $\Sl_2$-like, and there is nothing 
to be proved. If $C$ is of even order, then $H=\tau_G(C)$ contains $\sigma$, and by the same argument which was used in the
proof of Lemma~\ref{lem:finsub} one concludes that the sequence
\begin{equation}
\label{eq:TA2}
\xymatrix{\triv\ar[r]&\Omega_2(H,\Z_2)^-\ar[r]^j&\Upsilon(H,\sigma,2)\ar[r]^-{\tau_{H}}&H\ar[r]&\triv}
\end{equation}
must split, implying that $\Omega_2(H,\Z_2)^-=0$.
Thus Proposition~\ref{prop:2grp} and condition (i) of the definition of being $\Sl_2$-like
imply that $C$ is abelian.

\noindent
{\bf Case 3:} $\tau_G(x)=1$: 
 In this case $x\in\Omega^-\smallsetminus\{1\}$.
As $\Omega^-$ is abelian, one has $\Omega^-\subseteq C$.
Suppose there exists $z$ in $C\smallsetminus \Omega^-$, and let $\bar{z}=\tau_G(z)$.
Then $\bar{z}\not=1_{G}$. 
If $\bar{z}$ would be of even order, there would exist $d\in\N$ satisfying $\sigma=\bar{z}^d$,
and thus $x=x^{-1}$, a contradiction. 
If $\bar{z}$ would be of odd order, then 
$x\in C\cap\Omega^-=(\Omega^-)^{\langle \bar{z}\rangle}$
contradicting Fact~\ref{fact:Ofix}. This shows that $C=\Omega^-$.
\end{proof}

\subsection{The $2$-modular representation theory of $\operatorname{PSL}_2(3)$}
\label{ss:2mod}
Note that $G=\Sl_2(3)$ has a unique $2$-Sylow subgroup isomorphic to the quaternion group of order $8$, thus containing a unique involution $\sigma$, and that the only non-trivial subgroups of odd order of $G$ have order $3$. 
In particular, $G$ is $\Sl_2$-like
and $\bG=G/\langle\sigma\rangle\simeq C_3\ltimes V_4$, where
$V_4$ denotes the Klein four-group.

As $\bG$ is $2$-local,  the irreducible
$\bF_2[\bG]$-modules $\{\bF_2,S_1,S_2\}$ correspond to the irreducible $\bF_2[C_3]$-modules and thus are all linear.
Let $P_0$ denote the projective indecomposable $\bF_2[\bG]$-module
such that $P_0/\operatorname{Rad}(P_0)\simeq \bF_2$. Then $P_0\simeq\idn_{C_3}^{\bG}(\bF_2)$, where $\bF_2$ denotes the trivial left $\bF_2[\bG]$-module. In particular, $\dim_{\bF_2}(P_0)=4$. Considering $V_4$ as left $\F_2[C_3]$-module, where
$C_3$ is acting by conjugation, yields that $\bF_2\otimes_{\F_2}V_4\simeq S_1\oplus S_2$. Thus, by the Willems-Stammbach theorem (cf. \cite{stamm:splt}, \cite{wwil:splt}), one concludes that 
$JP_0/J^2P_0\simeq S_1\oplus S_2$, where $J=\operatorname{Rad}(\bF_2[\bG])$. Hence the Loewy structure of $P_0$ can be described by
\begin{equation}
\label{eq:P0}
P_0=\begin{bmatrix}
    \bF_2\\
    S_1\oplus S_2\\
    \bF_2
\end{bmatrix}
\end{equation}
Defining $P_1$ as the minimal projective cover of $S_1$ and $P_2$ as
the minimal projective cover of $S_2$, one obtains
$P_1\simeq S_1\otimes P_0$ and $P_2\simeq S_2\otimes P_0$ with Loewy structure
\begin{equation}
\label{eq:Px}
P_1=\begin{bmatrix}
    S_1\\
    S_2\oplus \bF_2\\
    S_1
\end{bmatrix},
\qquad\qquad
P_2=\begin{bmatrix}
    S_2\\
    \bF_2\oplus S_1\\
    S_2
\end{bmatrix}.
\end{equation}
From these facts one concludes that the trivial left
$\bF_2[\bG]$-module $\bF_2$ has a minimal projective resolution
\begin{equation}
    \label{eq:minC3V4}
    \xymatrix{
    0\ar[r]&\Omega_2(\bG,\bF_2)\ar[r]&
    P_1\oplus P_2\ar[r]&P_0\ar[r]&\bF_2\ar[r]&0}
\end{equation}
and therefore, 
\begin{equation}
\label{eq:O2C3V4}
\Omega_2(\bG,\bF_2)=\begin{bmatrix}
    S_1\oplus S_2\oplus\bF_2\\
    S_1\oplus S_2
\end{bmatrix}
\end{equation}
In particular, the image of 
$\bF_2\otimes\Omega_2(\bG,\F_2)^\circ$ in
the Grothendieck group $G_0(\bF_2[\bG])$ coincides with $2[S_1]+2[S_2]$,
and thus $G$ has the $\Omega_2(\bG,\F_2)^\circ$-fixed point property.
%\qedhere    

%%%%%%%%%%%%%%%%%%%%%%%%%%%%%%
% \phantomsection
\section*{Funding}
\label{s:funding}
\addcontentsline{toc}{section}{Funding}
The first author was partially supported by FAPEMIG [APQ-02750-24]. 
The second author gratefully acknowledges financial support by the
PRIN2022 “Group theory and its applications”. The third named author is grateful for financial support from FAPDF (Edital 09/2023) and FEMAT (Edital 01/2020).

%%%%%%%%%%%%%%%%%%%%%%%%%%%%%%
% \phantomsection
\section*{Acknowledgements}
\label{s:acknowl}
\addcontentsline{toc}{section}{Acknowledgements}
The first author is thankful to B. Klopsch and I. Snopce for helpful conversations about this article.  
The second author is a member of the
Gruppo Nazionale per le Strutture Algebriche, Geometriche e le loro Applicazioni
(GNSAGA), which is part of the Istituto Nazionale di Alta Matematica (INdAM).

%%%%%%%%%%%%%%%%%%%%%%%%%%%%%%
 %%%%%%%%%%%%%%%%%%%%%%%%%%%%%%%%%%%%%%%%%%%%%%%%%
%
% \bibliographystyle{alpha}
% \bibliography{CT}{}

\end{document}